\newtheorem{theorem}{Theorem}[section]
\newtheorem{lemma}[theorem]{Lemma}
\newtheorem{corollary}[theorem]{Corollary}
\theoremstyle{definition}
\newtheorem{definition}[theorem]{Definition}
\newtheorem{remark}[theorem]{Remark}
\newtheorem{example}[theorem]{Example}
\numberwithin{equation}{section}
\begin{document}
\title[Geodesics and $F$-geodesics on tangent bundle with $\varphi$-Sasaki metric\ldots]{Geodesics and $F$-geodesics on tangent bundle with $\varphi$-Sasaki metric over para-K\"{a}hler-Norden manifold}
\author{Abderrahim ZAGANE$^{1}$}
\address{RELIZANE University, Faculty of Science and Technology, Department of 
Mathematics, 48000, RELIZANE-ALGERIA}
\email{Zaganeabr2018@gmail.com}
\begin{abstract}
In this paper, we investigate some geodesics and $F$-geodesics problems on tangent bundle and on $\varphi$-unit tangent bundle $T^{\varphi}_{1}M$ equipped with the $\varphi$-Sasaki metric over para-K\"{a}hler-Norden manifold $(M^{2m}, \varphi, g)$. 

\textbf{2010 Mathematics subject classifications:} Primary  53C22, 58E10
; Secondary 53C15, 53C07.

\textbf{Keywords:} Para-K\"{a}hler-Norden manifold, $\varphi$-Sasaki metric, F-geodesics.

\textbf{Corresponding author}: Abderrahim ZAGANE 
\end{abstract}

\maketitle

\section{\protect\bigskip Introduction}

On the tangent bundle of a Riemannian manifold one can define natural Riemannian metrics. Their construction makes use of the Levi-Civita connection. Among them, the so called Sasaki metric \cite{Sas} is of particular interest. That is why the geometry of tangent bundle equipped with the Sasaki metric has been studied by many authors such as  Yano and Ishihara \cite{Y.I}, Dombrowski  \cite{Dom},  Salimov  and his collaborators \cite{S.G.A} etc. The rigidity of Sasaki metric has incited some researchers to construct and study other metrics on tangent bundle. This is the reason why they have attempted to search for different metrics on the tangent bundle which are different deformations of the Sasaki metric. Among them, we mention the Cheeger-Gromoll metric \cite {M.T}, the Mus-Sasaki Metric \cite {Z.D2} and the Berger-type deformed Sasaki metric \cite {A.S.G,Yam}. The geometry of tangent bundle remains a rich area of research in differential geometry to this day.\quad

Geodesy on the tangent bundle has been studied by many authors. In particular the oblique geodesics, non-vertical geodesics and their projections onto the base manifold.  Sasaki \cite{Sas2} and Sato \cite{Sat} gave a complete description of the curves and vector fields along them which generated non-vertical geodesics on  the tangent bundle and on  unit  the tangent bundle respectively. They proved that the projected curves have constant  geodesic curvatures (Frenet curvatures). Nagy \cite{Nag} generalized these results to the case of locally symmetric base manifold. Yampolsky \cite{Yam} also did the same studies on the tangent bundle and on  unit  the tangent bundle with the Berger-type deformed Sasaki metric over K\"{a}hlerian manifold, in the cases of locally symmetric base manifold and of the constant holomorphic curvature base manifold. Also, we refer to \cite{D.Z3,S.G.A,S.K,Zag14}. \quad

The notion of $F$-planar curves generalizes the magnetic curves and implicitly the geodesics (see \cite{H.M, M.S}), but  the notion of $F$-geodesic, which is slightly different from that of $F$-planar curve \cite{B.D}. Recently, in mathematics literature, a series of papers on magnetic curves,  $F$-planar curves  and  $F$-geodesic (see \cite{D.G.K,D.I.M.N,Nis}).

In previous works, \cite{Zag7,Zag21}, we proposed the $\varphi$-Sasaki metric on the tangent bundle over para-K\"{a}hler-Norden manifold $(M^{2m}, \varphi, g)$, where we studied respectively the para-K\"{a}hler-Norden properties on the tangent bundle and then Geometry of $\varphi$-Sasaki metric on tangent bundle. In this paper, we investigate some geodesics and $F$-geodesics properties for the $\varphi$-Sasaki metric on the tangent bundle and on $\varphi$-unit tangent bundle. Firstly, we study the geodesics on $\varphi$-unit tangent bundle with respect to the $\varphi$-Sasaki metric, where  we establish  necessary and sufficient conditions under which a curve be a geodesic with respect to this metric (Theorem \ref{th_2}, Corollary\ref{co_1} and Corollary\ref{co_2}), we then discuss the Frenet curvatures of the projected of the non-vertical geodesic (Theorem \ref{th_3}, Theorem \ref{th_4}, Corollary\ref{co_3} and Theorem \ref{th_5}). Secondly, we study the $F$-geodesics and $F$-planar curves on tangent bundle with respect to the $\varphi$-Sasaki metric (Theorem \ref{th_8}, Theorem \ref{th_9} and Theorem \ref{th_10}). Finally, we study the $F$-geodesics and $F$-planar curves on the $\varphi$-unit tangent bundle with respect to the $\varphi$-Sasaki metric (Theorem \ref{th_11}, Theorem \ref{th_12} and Theorem \ref{th_13}).

\section{Preliminaries}

Let $TM$ be the tangent bundle over an $m$-dimensional Riemannian manifold $(M^{m},g)$ and  the natural projection $ \pi: TM \rightarrow M$. A local chart $(U,x^{i})_{i=\overline{1,m}}$ on $M$ induces a local chart  $(\pi^{-1}(U),x^{i},\xi^{i})_{i=\overline{1,m}}$ on $TM$. Denote by $\Gamma_{ij}^{k}$ the Christoffel symbols of $g$ and by $\nabla$ the Levi-Civita connection of $g$.\qquad

The Levi Civita connection $\nabla$ defines a direct sum decomposition
\begin{eqnarray*}
T_{(x,\xi)}TM=V_{(x,\xi)}TM\oplus H_{(x,\xi)}TM.
\end{eqnarray*}
of the tangent bundle to $TM$ at any $(x,\xi)\in TM$ into vertical subspace
\begin{eqnarray*}
V_{(x,\xi)}TM&=&Ker(d\pi_{(x,\xi)})=\{a^{i}\frac{\partial}{\partial \xi^{i}}|_{(x,\xi)},\, a^{i}\in\mathbb{R}\},
\end{eqnarray*}
and the horizontal subspace
\begin{eqnarray*}
H_{(x,\xi)}TM&=&\{a^{i}\frac{\partial}{\partial x^{i}}|_{(x,\xi)}-a^{i}\xi^{j}\Gamma_{ij}^{k}\frac{\partial}{\partial \xi^{k}}|_{(x,\xi)},\, a^{i}\in \mathbb{R}\}.
\end{eqnarray*}.

Let $Z=Z^{i}\frac{\partial}{\partial x^{i}}$ be a local vector field on $M$. The vertical and the horizontal lifts of $Z$ are defined by
\begin{eqnarray*}
{}^{V}\!Z&=& Z^{i}\frac{\partial}{\partial \xi^{i}},\\
{}^{H}\!Z&=&Z^{i}(\frac{\partial}{\partial x^{i}}- \xi^{j}\Gamma_{ij}^{k}\frac{\partial}{\partial \xi^{k}}).
\end{eqnarray*}
We have ${}^{H}\!(\frac{\partial}{\partial x^{i}})=\frac{\partial}{\partial x^{i}}- \xi^{j}\Gamma_{ij}^{k}\frac{\partial}{\partial \xi^{k}}$ and ${}^{V}\!(\frac{\partial}{\partial x^{i}})=\frac{\partial}{\partial \xi^{i}}$, then $({}^{H}\!(\frac{\partial}{\partial x^{i}}),{}^{V}\!(\frac{\partial}{\partial x^{i}}))_{i=\overline{1,m}}$ is a local adapted frame on $TTM$.

\section{$\varphi$-Sasaki metric}
An almost product structure $\varphi$ on a manifold $M$ is a $(1,1)$-tensor field on $M$ such that $\varphi^{2}=id_{M}$, $\varphi\neq \pm id_{M}$ ($id_{M}$ is the identity tensor field of type $(1,1)$ on $M$). The pair $(M,\varphi)$ is called an almost product manifold.\quad

An almost para-complex manifold is an almost product manifold $(M,\varphi)$,  such that the two eigenbundles $TM^{+}$ and $TM^{-}$ associated to the two eigenvalues $+1$ and $-1$ of $\varphi$, respectively, have the same rank. Note that the dimension of an almost par-acomplex manifold is necessarily even \cite{C.F.G}.\quad

An almost para-complex structure $\varphi$ is integrable if the Nijenhuis tensor:
\begin{eqnarray*}
N_{\varphi}(X,Y)=[\varphi X,\varphi Y]-\varphi[X,\varphi Y]-\varphi[\varphi X,Y]+[X,Y]	
\end{eqnarray*}
vanishes identically on $M$. On the other hand, in order that an almost para-complex structure be integrable, it is necessary and sufficient that we can introduce a torsion free linear connection $\nabla$ such that $\nabla\varphi=0$ \cite{S.I.E}.\quad

An almost para-complex Norden manifold $(M^{2m}, \varphi, g)$ is a $2m$-dimensional differentiable manifold $M$ with an almost para-complex structure $\varphi$ and a Riemannian metric $g$ such that:
\begin{eqnarray*}
g(\varphi X, Y)= g(X, \varphi Y)&\Leftrightarrow& g(\varphi X, \varphi Y) = g(X, Y),
\end{eqnarray*}
for any vector fields $X$ and $Y$  on $M$, in this case $g$ is called a pure metric with respect to $\varphi$ or para-Norden metric (B-metric)\cite{S.I.E}.\quad

Also note that 
\begin{eqnarray}\label{twin}
G(X,Y) = g(\varphi X, Y),
\end{eqnarray}
is a bilinear, symmetric tensor field of type $(0, 2)$ on $(M, \varphi)$ and pure with respect to the paracomplex structure $\varphi$, which is called the twin (or dual) metric of $g$, and it plays a role similar to the K\"{a}hler form in Hermitian Geometry.
Some properties of twin Norden metric are investigated in \cite{I.S,S.I.E}.\quad

A para-K\"{a}hler-Norden manifold is an almost para-complex Norden manifold $(M^{2m}, \varphi, g)$ such that $\varphi$ is integrable i.e. $\nabla \varphi = 0$, where $\nabla$ is the Levi-Civita connection of $g$ \cite{S.G.I,S.I.E}.\quad

It is well known that if $(M^{2m}, \varphi, g)$ is a para-K\"{a}hler-Norden manifold, the Riemannian curvature tensor is pure \cite{S.I.E}. 

\begin{definition}\label{def_0}\cite{Zag7}
Let  $(M^{2m}, \varphi, g)$ be a para-K\"{a}hler-Norden manifold. On the tangent bundle $TM$, we define a  $\varphi$-Sasaki metric noted $g^{\varphi}$ by
\begin{eqnarray}
(1)\quad g^{\varphi}({}^{H}\!X,{}^{H}\!Y)_{(x,\xi)} &=&g_{x}(X,Y),\nonumber\\
(2)\quad g^{\varphi}({}^{H}\!X,{}^{V}\!Y)_{(x,\xi)} &=& 0,\nonumber\\
(3)\quad g^{\varphi}({}^{V}\!X,{}^{V}\!Y)_{(x,\xi)} &=& g_{x}(X,\varphi Y)=G_{x}(X, Y),\nonumber
\end{eqnarray}
for any vector fields $X$ and $Y$  on $M$ and $(x,\xi)\in TM$, where $G$ is the twin Norden metric of $g$ defined by \eqref{twin}.
\end{definition}

\begin{theorem}\label{th_0}\cite{Zag7}
Let $(M^{2m}, \varphi, g)$ be a para-K\"{a}hler-Norden manifold and $TM$ its tangent bundle equipped with the $\varphi$-Sasaki metric $g^{\varphi}$. If  $\nabla$ $($resp $\widetilde{\nabla}$$)$ denote the Levi-Civita connection of $(M^{2m}, \varphi, g)$ $($resp  $(TM,g^{\varphi})$ $)$,  then we have
\begin{eqnarray*}
(1)\quad (\widetilde{\nabla}_{{}^{H}\!X}{}^{H}\!Y)_{(x,\xi)}&=&{}^{H}\!(\nabla_{X}Y)_{(x,\xi)}-\frac{1}{2}{}^{V}\!(R_{x}(X,Y)\xi),\\
(2)\quad (\widetilde{\nabla}_{{}^{H}\!X}{}^{V}\!Y)_{(x,\xi)}&=&{}^{V}\!(\nabla_{X}Y)_{(x,\xi)}+\dfrac{1}{2}{}^{H}\!(R_{x}(\varphi \xi,Y)X),\\
(3)\quad (\widetilde{\nabla}_{{}^{V}\!X}{}^{H}\!Y)_{(x,\xi)}&=&\dfrac{1}{2}{}^{H}\!(R_{x}(\varphi \xi,X)Y),\\
(4)\quad (\widetilde{\nabla}_{{}^{V}\!X}{}^{V}\!Y)_{(x,\xi)}&=&0,
\end{eqnarray*}
for all vector fields $X$ and $Y$ on $M$ and $(x,\xi)\in TM$, where $R$ denote the curvature tensor of $(M^{2m}, \varphi, g)$.
\end{theorem}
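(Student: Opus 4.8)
The plan is to apply the Koszul formula on $(TM,g^{\varphi})$: for vector fields $\mathcal{A},\mathcal{B},\mathcal{C}$ on $TM$,
\[
2g^{\varphi}(\widetilde{\nabla}_{\mathcal{A}}\mathcal{B},\mathcal{C})
=\mathcal{A}\,g^{\varphi}(\mathcal{B},\mathcal{C})+\mathcal{B}\,g^{\varphi}(\mathcal{A},\mathcal{C})-\mathcal{C}\,g^{\varphi}(\mathcal{A},\mathcal{B})
+g^{\varphi}([\mathcal{A},\mathcal{B}],\mathcal{C})-g^{\varphi}([\mathcal{A},\mathcal{C}],\mathcal{B})-g^{\varphi}([\mathcal{B},\mathcal{C}],\mathcal{A}).
\]
Since at every $(x,\xi)$ the lifts ${}^{H}\!X,{}^{V}\!X$ (with $X$ running over a local frame on $M$) span $T_{(x,\xi)}TM$ and $g^{\varphi}$ is nondegenerate, it suffices to evaluate this identity when $\mathcal{A},\mathcal{B},\mathcal{C}$ are each a horizontal or a vertical lift, and then read off $\widetilde{\nabla}$.

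Two sets of auxiliary facts feed the computation. First, the bracket relations of lifts, which follow from the expressions for ${}^{H}\!X,{}^{V}\!X$ in the adapted frame of Section~2 (equivalently, from \cite{Zag7}): $[{}^{H}\!X,{}^{H}\!Y]={}^{H}\![X,Y]-{}^{V}\!(R(X,Y)\xi)$, $[{}^{H}\!X,{}^{V}\!Y]={}^{V}\!(\nabla_{X}Y)$, and $[{}^{V}\!X,{}^{V}\!Y]=0$. Second, the action of lifts on functions pulled back from $M$: ${}^{H}\!Z(f\circ\pi)=(Zf)\circ\pi$ and ${}^{V}\!Z(f\circ\pi)=0$ (vertical vectors are tangent to the fibres). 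Together with Definition~\ref{def_0}, every term produced by the Koszul formula is then either $0$ or of the form $g(\cdots)\circ\pi$ or $G(\cdots)\circ\pi$, so the four choices of $(\mathcal{A},\mathcal{B})$ tested against $\mathcal{C}\in\{{}^{H}\!Z,{}^{V}\!Z\}$ reduce to elementary identities. When all three entries are horizontal the formula collapses to the Koszul formula of $(M,g)$ and produces the ${}^{H}\!(\nabla_{X}Y)$ term; the vertical component of the horizontal bracket feeds the curvature terms $-\tfrac12{}^{V}\!(R(X,Y)\xi)$ and $\tfrac12{}^{H}\!(R(\varphi\xi,Y)X)$ etc.

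The step I expect to be the main obstacle is the horizontal part of the mixed covariant derivatives: the Koszul computation for $\widetilde{\nabla}_{{}^{H}\!X}{}^{V}\!Y$ tested against ${}^{H}\!Z$ delivers $g(\varphi R(X,Z)\xi,Y)$, and this must be identified with $\tfrac12 g(R(\varphi\xi,Y)X,Z)$. Here one uses crucially that $(M^{2m},\varphi,g)$ is para-K\"ahler-Norden: $\nabla\varphi=0$ makes the curvature pure ($\varphi R=R\varphi$, $R(\varphi\cdot,\cdot)=R(\cdot,\varphi\cdot)$) and, combined with self-adjointness of $\varphi$ and the pair symmetry $g(R(U,V)W,T)=g(R(W,T)U,V)$, yields exactly the required rewriting; the analogous manipulation handles $\widetilde{\nabla}_{{}^{V}\!X}{}^{H}\!Y$. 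The same hypothesis $\nabla\varphi=0$ also gives $\nabla G=0$, i.e. $Z\,G(X,Y)=G(\nabla_{Z}X,Y)+G(X,\nabla_{Z}Y)$, which is what makes the vertical-vertical terms telescope, forcing $\widetilde{\nabla}_{{}^{V}\!X}{}^{V}\!Y=0$ and pinning the vertical part of $\widetilde{\nabla}_{{}^{H}\!X}{}^{V}\!Y$ to be precisely ${}^{V}\!(\nabla_{X}Y)$. Finally, formula~(3) can either be obtained by the same Koszul computation or, more quickly, deduced from (1), (2), (4) together with torsion-freeness of $\widetilde{\nabla}$ and $[{}^{V}\!X,{}^{H}\!Y]=-{}^{V}\!(\nabla_{Y}X)$.
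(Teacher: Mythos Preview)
Your approach is correct and is the standard one: the Koszul formula together with the well-known bracket relations of horizontal and vertical lifts, the behaviour of lifts on pulled-back functions, and the purity of the curvature tensor (coming from $\nabla\varphi=0$) are exactly the ingredients that pin down $\widetilde{\nabla}$. The crucial identification $g(\varphi R(X,Z)\xi,Y)=g(R(\varphi\xi,Y)X,Z)$ via purity plus the pair symmetry of $R$ is precisely what is needed for the horizontal components in (2) and (3), and $\nabla G=0$ indeed makes the vertical--vertical computation collapse.

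Note, however, that the paper does \emph{not} contain a proof of this theorem: it is quoted verbatim from \cite{Zag7}, where the result was established. So there is no in-paper argument to compare with. That said, the computation in \cite{Zag7} (and in the broader literature on natural metrics on $TM$, starting with Dombrowski \cite{Dom} for the Sasaki metric) proceeds exactly along the lines you describe; your proof is essentially the canonical one. One cosmetic point: in your sketch of the mixed case you write that the Koszul output ``must be identified with $\tfrac12 g(R(\varphi\xi,Y)X,Z)$''; what actually needs to be checked is $g(\varphi R(X,Z)\xi,Y)=g(R(\varphi\xi,Y)X,Z)$ (no factor $\tfrac12$ in the identity itself), after which dividing the Koszul relation by $2$ gives the claimed coefficient $\tfrac12$ in formula~(2).
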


The $\varphi$-unit tangent sphere bundle  over a para-K\"{a}hler-Norden manifold $(M^{2m}, \varphi, g)$, is the hypersurface 
\begin{eqnarray*}
T^{\varphi}_{1}M=\big\{(x,\xi)\in TM,\,g(\xi,\varphi \xi)=1\big\}.
\end{eqnarray*}
The unit normal vector field to $T^{\varphi}_{1}M$ is given  by
$$\mathcal{N}={}^{V}\!\xi.$$ 

The tangential lift ${}^{T}\!X$ with respect to $g^{\varphi}$ of a vector $X\in T_{x}M$ to $(x,\xi)\in T^{\varphi}_{1}M$ as the tangential projection of the vertical lift of $X$
to  $(x,\xi)$ with respect to $\mathcal{N}$, that is
\begin{equation*}
{}^{T}\!X={}^{V}\!X-g^{\varphi}_{(x,\xi)}({}^{V}\!X,\mathcal{N}_{(x,\xi)})\mathcal{N}_{(x,\xi)}={}^{V}\!X-g_{x}(X,\varphi \xi){}^{V}\!\xi.
\end{equation*}

The tangent space $T_{(x,\xi)}T^{\varphi}_{1}M$ of $T^{\varphi}_{1}M$ at $(x,\xi)$ is given by
\begin{eqnarray*}
T_{(x,\xi)}T^{\varphi}_{1}M= \{{}^{H}\!X+{}^{T}\!Y\,/\, X \in T_{x}M, Y\in  \xi^{\bot}\subset T_{x}M\}.
\end{eqnarray*}
where $\xi^{\bot}=\big\{Y\in T_{x}M,\,g(Y,\varphi \xi)=0\big\}$, see \cite{Zag21}.

\begin{theorem}\label{th_1}\cite{Zag21}
Let $(M^{2m}, \varphi, g)$ be a para-K\"{a}hler-Norden manifold and $T^{\varphi}_{1}M$ its $\varphi$-unit tangent bundle equipped with the $\varphi$-Sasaki metric. If $\widehat{\nabla}$ denote the Levi-Civita connection of $\varphi$-Sasaki metric on $T^{\varphi}_{1}M$, then we have the following formulas.
\begin{eqnarray*}
1.\,\widehat{\nabla}_{{}^{H}\!X}{}^{H}\!Y&=&{}^{H}\!(\nabla_{X}Y)-\frac{1}{2}{}^{T}\!(R(X,Y)\xi),\\
2.\,\widehat{\nabla}_{{}^{H}\!X}{}^{T}\!Y&=&{}^{T}\!(\nabla_{X}Y)+\dfrac{1}{2}{}^{H}\!\left(R(\varphi \xi,Y)X\right),\\
3.\,\widehat{\nabla}_{{}^{T}\!X}{}^{H}\!Y&=&\dfrac{1}{2}{}^{H}\!\left(R(\varphi \xi,X)Y\right),\\
4.\,\widehat{\nabla}_{{}^{T}\!X}{}^{T}\!Y&=&-g(Y,\varphi \xi){}^{T}\!X,
\end{eqnarray*}
for all vector fields $X$ and $Y$ on $M$, where $\nabla$ is the Levi-Civita connection and $R$ is its curvature tensor of $(M^{2m}, \varphi, g)$.
\end{theorem}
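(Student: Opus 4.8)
\emph{Proof proposal.} The plan is to obtain $\widehat\nabla$ as the tangential part of the ambient Levi-Civita connection $\widetilde\nabla$ of $(TM,g^{\varphi})$, since $T^{\varphi}_{1}M$ is a nondegenerate hypersurface of $(TM,g^{\varphi})$ with unit normal $\mathcal{N}={}^{V}\!\xi$ (indeed $g^{\varphi}(\mathcal{N},\mathcal{N})=g(\xi,\varphi\xi)=1$ on $T^{\varphi}_{1}M$). Thus for $U,V$ tangent to $T^{\varphi}_{1}M$ the Gauss formula gives $\widehat\nabla_{U}V=\widetilde\nabla_{U}V-g^{\varphi}(\widetilde\nabla_{U}V,\mathcal{N})\,\mathcal{N}$. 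To apply it, first extend the tangential lift off the hypersurface: for a vector field $Y$ on $M$ put ${}^{T}\!Y:={}^{V}\!Y-g(Y,\varphi\xi)\,{}^{V}\!\xi$ as a global field on $TM$, and note $g^{\varphi}({}^{T}\!Y,{}^{V}\!\xi)=g(Y,\varphi\xi)\bigl(1-g(\xi,\varphi\xi)\bigr)$ vanishes on $T^{\varphi}_{1}M$, so ${}^{T}\!Y$ restricts to a field tangent to $T^{\varphi}_{1}M$. Horizontal lifts are already tangent, so $\{{}^{H}\!X,{}^{T}\!Y\}$ span $TT^{\varphi}_{1}M$ and it suffices to compute $\widehat\nabla$ on the four resulting types of pairs.

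Next I would record the auxiliary identities governing the differentiation of the correction term $g(Y,\varphi\xi)\,{}^{V}\!\xi$ in the ambient bundle. Working in the adapted frame and using Theorem \ref{th_0} one checks $\widetilde\nabla_{{}^{V}\!X}{}^{V}\!\xi={}^{V}\!X$ and $\widetilde\nabla_{{}^{H}\!X}{}^{V}\!\xi=0$, the latter because the surviving term is $\tfrac12{}^{H}\!(R(\varphi\xi,\xi)X)$ and $R(\varphi\xi,\xi)=\varphi R(\xi,\xi)=0$ by purity of the curvature; one also needs $({}^{V}\!Z)(g(Y,\varphi\xi))=g(Y,\varphi Z)$ and $({}^{H}\!X)(g(Y,\varphi\xi))=g(\nabla_{X}Y,\varphi\xi)$ (using $\nabla\varphi=0$). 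With these, three of the four cases follow almost immediately from Theorem \ref{th_0}. For ${}^{H}\!X,{}^{H}\!Y$ one gets $\widetilde\nabla_{{}^{H}\!X}{}^{H}\!Y={}^{H}\!(\nabla_{X}Y)-\tfrac12{}^{V}\!(R(X,Y)\xi)$ and rewrites ${}^{V}\!(R(X,Y)\xi)={}^{T}\!(R(X,Y)\xi)$ on $T^{\varphi}_{1}M$, the normal coefficient $g(R(X,Y)\xi,\varphi\xi)$ vanishing from purity of $g$ together with $g(R(X,Y)\varphi\xi,\xi)=-g(R(X,Y)\xi,\varphi\xi)$; for ${}^{H}\!X,{}^{T}\!Y$ the terms reorganize into ${}^{T}\!(\nabla_{X}Y)+\tfrac12{}^{H}\!(R(\varphi\xi,Y)X)$, which is already tangential; and for ${}^{T}\!X,{}^{H}\!Y$ one again uses $R(\varphi\xi,\xi)=0$ to drop the $g(X,\varphi\xi)$-part, leaving $\tfrac12{}^{H}\!(R(\varphi\xi,X)Y)$.

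The only case requiring genuine bookkeeping is ${}^{T}\!X,{}^{T}\!Y$: expanding $\widetilde\nabla_{{}^{V}\!X-g(X,\varphi\xi){}^{V}\!\xi}\bigl({}^{V}\!Y-g(Y,\varphi\xi){}^{V}\!\xi\bigr)$ with the product rule and the identities above yields, on $T^{\varphi}_{1}M$, $\widetilde\nabla_{{}^{T}\!X}{}^{T}\!Y=-G(X,Y){}^{V}\!\xi-g(Y,\varphi\xi){}^{V}\!X+2g(X,\varphi\xi)g(Y,\varphi\xi){}^{V}\!\xi$; subtracting its $\mathcal{N}$-component $-G(X,Y)+g(X,\varphi\xi)g(Y,\varphi\xi)$ collapses everything to $-g(Y,\varphi\xi)\bigl({}^{V}\!X-g(X,\varphi\xi){}^{V}\!\xi\bigr)=-g(Y,\varphi\xi)\,{}^{T}\!X$. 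I expect this tangential-lift-versus-tangential-lift computation to be the main obstacle, the key point being to keep $g(\xi,\varphi\xi)$ as a function throughout and only restrict to $T^{\varphi}_{1}M$ at the very end; the remaining three cases, and all the purity and Norden symmetries used to kill normal components, are routine consequences of Theorem \ref{th_0}.
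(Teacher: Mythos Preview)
Your argument is correct and is the natural one: since $T^{\varphi}_{1}M$ sits in $(TM,g^{\varphi})$ as a nondegenerate hypersurface with unit normal ${}^{V}\!\xi$, the Gauss decomposition $\widehat\nabla_{U}V=\widetilde\nabla_{U}V-g^{\varphi}(\widetilde\nabla_{U}V,{}^{V}\!\xi)\,{}^{V}\!\xi$ together with Theorem~\ref{th_0} determines everything. Your auxiliary identities $\widetilde\nabla_{{}^{V}\!X}{}^{V}\!\xi={}^{V}\!X$, $\widetilde\nabla_{{}^{H}\!X}{}^{V}\!\xi=0$, $({}^{V}\!Z)(g(Y,\varphi\xi))=G(Y,Z)$, $({}^{H}\!X)(g(Y,\varphi\xi))=g(\nabla_{X}Y,\varphi\xi)$ are all correct, and the purity arguments you use to kill $R(\varphi\xi,\xi)$ and the normal component $g(R(X,Y)\xi,\varphi\xi)$ are valid. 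The bookkeeping in the ${}^{T}\!X,{}^{T}\!Y$ case checks out exactly as you describe.

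Note, however, that the present paper does \emph{not} supply its own proof of this theorem: it is quoted from \cite{Zag21} and stated without argument. So there is no in-paper proof to compare against. Your Gauss-formula approach is the standard way such hypersurface connection formulas are derived (and is almost certainly what \cite{Zag21} does), so your write-up would serve well as the omitted justification.
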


\section{Geodesics on $\varphi$-unit tangent bundle with the $\varphi$-Sasaki metric}
Let $\Gamma =(\gamma(t),\xi(t))$ be a naturally parameterized curve on the tangent bundle $TM$ (i.e. $t$ is an arc length parameter on $\Gamma$), where $\gamma$ is a curve on $M$ and $\xi$ is a vector field along this curve. Denote $\gamma_{t}^{\prime}=\frac{d\,x}{d\,t}$, $\gamma_{t}^{\prime\prime}=\nabla_{\gamma_{t}^{\prime}}\gamma_{t}^{\prime}$, $\xi_{t}^{\prime}=\nabla_{\gamma_{t}^{\prime}}\xi$,  $\xi_{t}^{\prime\prime}=\nabla_{\gamma_{t}^{\prime}}\xi_{t}^{\prime}$ and $\Gamma_{t}^{\prime}=\frac{d\,\Gamma}{d\,t}$. Then
\begin{equation}\label{eq_1}
\Gamma_{t}^{\prime}={}^{H}\!\gamma_{t}^{\prime} + {}^{V}\!\xi_{t}^{\prime}.
\end{equation}

\begin{lemma}
Let $(M^{2m}, \varphi, g)$ be a para-K\"{a}hler-Norden manifold, $T^{\varphi}_{1}M$ its $\varphi$-unit tangent bundle equipped with the $\varphi$-Sasaki metric  and $\Gamma =(\gamma(t),\xi(t))$  be a curve on $T^{\varphi}_{1}M$. Then we have
\begin{equation}\label{eq_2}
\Gamma_{t}^{\prime} = {}^{H}\!\gamma_{t}^{\prime} + {}^{T}\!\xi_{t}^{\prime},
\end{equation}
\end{lemma}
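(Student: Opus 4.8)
The plan is to start from the general formula \eqref{eq_1} on the full tangent bundle, namely $\Gamma_{t}^{\prime}={}^{H}\!\gamma_{t}^{\prime} + {}^{V}\!\xi_{t}^{\prime}$, and then rewrite the vertical part ${}^{V}\!\xi_{t}^{\prime}$ in terms of the tangential lift ${}^{T}\!\xi_{t}^{\prime}$. By definition of the tangential lift, for any vector $X\in T_{x}M$ we have ${}^{V}\!X={}^{T}\!X+g_{x}(X,\varphi\xi){}^{V}\!\xi$, so applied to $X=\xi_{t}^{\prime}$ this gives ${}^{V}\!\xi_{t}^{\prime}={}^{T}\!\xi_{t}^{\prime}+g(\xi_{t}^{\prime},\varphi\xi){}^{V}\!\xi$. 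Hence the claim \eqref{eq_2} reduces to showing that the correction term vanishes, i.e. $g(\xi_{t}^{\prime},\varphi\xi)=0$ along $\Gamma$.

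The key step is to exploit that $\Gamma$ lies on $T^{\varphi}_{1}M$, so the constraint $g(\xi(t),\varphi\,\xi(t))=1$ holds for all $t$. Differentiating this identity along $\gamma$ and using $\nabla\varphi=0$ (para-K\"{a}hler-Norden condition) together with the purity/symmetry $g(\varphi X,Y)=g(X,\varphi Y)$, I get $0=\frac{d}{dt}g(\xi,\varphi\xi)=g(\nabla_{\gamma_{t}^{\prime}}\xi,\varphi\xi)+g(\xi,\varphi\nabla_{\gamma_{t}^{\prime}}\xi)=2g(\xi_{t}^{\prime},\varphi\xi)$. Therefore $g(\xi_{t}^{\prime},\varphi\xi)=0$, which is exactly the vanishing needed; note this also confirms $\xi_{t}^{\prime}\in\xi^{\bot}$, consistent with the description of $T_{(x,\xi)}T^{\varphi}_{1}M$. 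Substituting back yields ${}^{V}\!\xi_{t}^{\prime}={}^{T}\!\xi_{t}^{\prime}$ and hence $\Gamma_{t}^{\prime}={}^{H}\!\gamma_{t}^{\prime}+{}^{T}\!\xi_{t}^{\prime}$.

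There is essentially no serious obstacle here; the only thing to be careful about is the order of operations — one must first establish \eqref{eq_1} (or simply invoke it, since it is already displayed in the excerpt) and then justify the metric compatibility computation cleanly, making sure $\nabla\varphi=0$ is invoked to move $\varphi$ past the covariant derivative. The mild subtlety worth a sentence is that $\varphi$ is parallel precisely because $(M^{2m},\varphi,g)$ is para-K\"{a}hler-Norden, so this hypothesis is genuinely used. I would present it as: recall \eqref{eq_1}; decompose ${}^{V}\!\xi_{t}^{\prime}$ via the tangential lift; differentiate the defining relation of $T^{\varphi}_{1}M$; conclude.
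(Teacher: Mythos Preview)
Your argument is correct and essentially identical to the paper's own proof: both start from \eqref{eq_1}, rewrite ${}^{V}\!\xi_{t}^{\prime}$ via the tangential lift, and then differentiate the constraint $g(\xi,\varphi\xi)=1$ to obtain $g(\xi_{t}^{\prime},\varphi\xi)=0$. Your version is slightly more explicit in invoking $\nabla\varphi=0$ and the purity relation when computing the derivative, but the substance is the same.
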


\begin{proof} Using $\eqref{eq_1}$, we have 
\begin{eqnarray*}
\Gamma_{t}^{\prime} &=& {}^{H}\!\gamma_{t}^{\prime} + {}^{V}\!\xi_{t}^{\prime}={}^{H}\!\gamma_{t}^{\prime} + {}^{T}\!\xi_{t}^{\prime}+g(\xi_{t}^{\prime},\varphi \xi){}^{V}\!u.
\end{eqnarray*}	 
Since $\Gamma =(\gamma(t),\xi(t))\in T^{\varphi}_{1}M$ then $g(\xi,\varphi \xi)=1$, on the other hand
\begin{eqnarray*}
0&=&\gamma_{t}^{\prime}g(\xi,\varphi \xi)=2g(\xi_{t}^{\prime},\varphi \xi),
\end{eqnarray*}
i.e.
\begin{eqnarray}\label{eq_3}
\quad g(\xi_{t}^{\prime},\varphi \xi)=0.
\end{eqnarray}
Hence, the proof of the lemma is completed.
\end{proof}
From \eqref{eq_2}, we have
\begin{eqnarray}\label{eq_4}
1=|\gamma_{t}^{\prime}|^{2}+ g(\xi_{t}^{\prime},\varphi \xi_{t}^{\prime}),
\end{eqnarray}
where  $|\,.\,|$ mean the norm of vectors with respect to the $(M^{2m}, \varphi, g)$.

\begin{theorem}\label{th_2}
Let $(M^{2m}, \varphi, g)$ be a para-K\"{a}hler-Norden manifold, $T^{\varphi}_{1}M$ its $\varphi$-unit tangent bundle equipped with the $\varphi$-Sasaki metric  and $\Gamma =(\gamma(t),\xi(t))$  be a curve on $T^{\varphi}_{1}M$. Then $\Gamma$ is a geodesic on $T^{\varphi}_{1}M$ if and only if
\begin{equation}\label{eq_5}
\left\{
\begin{array}{lll}
\gamma_{t}^{\prime\prime}&=&R(\xi_{t}^{\prime},\varphi \xi)\gamma_{t}^{\prime}\\
\xi_{t}^{\prime\prime}&=&g(\xi_{t}^{\prime\prime}, \varphi \xi) \xi
\end{array}
\right.
\end{equation}
Moreover, 
\begin{equation}\label{eq_6}
\left\{
\begin{array}{lll}
|\gamma_{t}^{\prime}|&=&\sqrt{1-\rho^{2}}\\
g(\xi_{t}^{\prime\prime}, \varphi \xi)&=&-g(\xi_{t}^{\prime}, \varphi \xi_{t}^{\prime})=-\rho^{2}
\end{array}
\right.	
\end{equation}
where $\rho = const$ and $0\leq \rho \leq1$.
\end{theorem}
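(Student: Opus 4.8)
The plan is to compute the acceleration $\widehat{\nabla}_{\Gamma_t'}\Gamma_t'$ of the curve explicitly, using the decomposition \eqref{eq_2} together with the four connection formulas of Theorem~\ref{th_1}, and then to read off the geodesic condition by separating horizontal and tangential components. Writing $\Gamma_t' = {}^H\gamma_t' + {}^T\xi_t'$ and expanding by bilinearity,
\[
\widehat{\nabla}_{\Gamma_t'}\Gamma_t'
= \widehat{\nabla}_{{}^H\gamma_t'}{}^H\gamma_t'
+ \widehat{\nabla}_{{}^H\gamma_t'}{}^T\xi_t'
+ \widehat{\nabla}_{{}^T\xi_t'}{}^H\gamma_t'
+ \widehat{\nabla}_{{}^T\xi_t'}{}^T\xi_t',
\]
where each summand is evaluated by Theorem~\ref{th_1}, the connection formulas being understood for vector fields along $\Gamma$, so that $\nabla_{\gamma_t'}\gamma_t' = \gamma_t''$ and $\nabla_{\gamma_t'}\xi_t' = \xi_t''$.

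In this expansion the antisymmetry $R(\gamma_t',\gamma_t')=0$ kills the curvature term in the first summand; relation \eqref{eq_3}, namely $g(\xi_t',\varphi\xi)=0$, makes the last summand $-g(\xi_t',\varphi\xi)\,{}^T\xi_t'$ vanish; and the two cross terms each contribute $\tfrac12\,{}^H\!\big(R(\varphi\xi,\xi_t')\gamma_t'\big)$, which reinforce rather than cancel. This should give
\[
\widehat{\nabla}_{\Gamma_t'}\Gamma_t'
= {}^H\!\big(\gamma_t'' + R(\varphi\xi,\xi_t')\gamma_t'\big) + {}^T\xi_t'' .
\]
Since at $(x,\xi)\in T^{\varphi}_1 M$ the horizontal and tangential summands of $T_{(x,\xi)}T^{\varphi}_1 M$ are complementary, $\Gamma$ is a geodesic iff both $\gamma_t'' + R(\varphi\xi,\xi_t')\gamma_t' = 0$ and ${}^T\xi_t'' = 0$; rewriting the former via $R(\varphi\xi,\xi_t')\gamma_t' = -R(\xi_t',\varphi\xi)\gamma_t'$ and unfolding ${}^T\xi_t'' = {}^V\xi_t'' - g(\xi_t'',\varphi\xi)\,{}^V\xi$ yields exactly the system \eqref{eq_5}.

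For the additional relations \eqref{eq_6}, I would differentiate the scalar functions $g(\xi_t',\varphi\xi)$ and $g(\xi_t',\varphi\xi_t')$ along $\Gamma$, using $\nabla\varphi = 0$ and the purity $g(\varphi X,Y) = g(X,\varphi Y)$. Differentiating \eqref{eq_3} gives $g(\xi_t'',\varphi\xi) = -g(\xi_t',\varphi\xi_t')$ for any curve on $T^{\varphi}_1 M$; differentiating $g(\xi_t',\varphi\xi_t')$ gives $2g(\xi_t'',\varphi\xi_t')$, which vanishes along a geodesic because the second equation of \eqref{eq_5} lets one write $g(\xi_t'',\varphi\xi_t') = g(\xi_t'',\varphi\xi)\,g(\xi,\varphi\xi_t')$ with $g(\xi,\varphi\xi_t')=0$ by \eqref{eq_3}. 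Hence $g(\xi_t',\varphi\xi_t') =: \rho^2$ is constant; \eqref{eq_4} then forces $|\gamma_t'|^2 = 1-\rho^2$, so $\rho^2\le 1$ because $g$ is Riemannian, i.e. $0\le\rho\le1$, and the first identity becomes $g(\xi_t'',\varphi\xi) = -\rho^2$.

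The work is essentially bookkeeping once Theorem~\ref{th_1} is available; the only points that demand care are the passage from the connection formulas for lifts of vector fields on $M$ to covariant derivatives along $\Gamma$, and the observation that the two curvature cross-terms add rather than cancel.
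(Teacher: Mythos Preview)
Your proposal is correct and follows essentially the same route as the paper: expand $\widehat{\nabla}_{\Gamma_t'}\Gamma_t'$ via the four formulas of Theorem~\ref{th_1} applied to $\Gamma_t'={}^H\gamma_t'+{}^T\xi_t'$, obtain ${}^H(\gamma_t''+R(\varphi\xi,\xi_t')\gamma_t')+{}^T\xi_t''$, and then derive \eqref{eq_6} by differentiating $g(\xi_t',\varphi\xi)$ and $g(\xi_t',\varphi\xi_t')$ and invoking \eqref{eq_3}, \eqref{eq_4}, and the second equation of \eqref{eq_5}. Your explicit remarks that $R(\gamma_t',\gamma_t')=0$, that \eqref{eq_3} kills the $\widehat{\nabla}_{{}^T\xi_t'}{}^T\xi_t'$ term, and that the two curvature cross-terms reinforce are exactly the computations the paper carries out without comment.
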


\begin{proof} Using formula $\eqref{eq_2}$and Theorem \ref{th_1}, we compute the derivative $\widehat{\nabla}_{\Gamma_{t}^{\prime}}\Gamma_{t}^{\prime}$. 
\begin{eqnarray*}
\widehat{\nabla}_{\Gamma_{t}^{\prime}}\Gamma_{t}^{\prime} & = &\widehat{\nabla}_{\displaystyle({}^{H}\!\gamma_{t}^{\prime} + {}^{T}\!\xi_{t}^{\prime})}({}^{H}\!\gamma_{t}^{\prime} + {}^{T}\!\xi_{t}^{\prime}) \\
& = &\widehat{\nabla}_{\displaystyle{}^{H}\gamma_{t}^{\prime}}{}^{H}\gamma_{t}^{\prime} +\widehat{\nabla}_{\displaystyle{}^{H}\!\gamma_{t}^{\prime}}{}^{T}\!\xi_{t}^{\prime}+\widehat{\nabla}_{{}^{T}\!\xi_{t}^{\prime}}{}^{H}\!\gamma_{t}^{\prime}+\widehat{\nabla}_{{}^{T}\!\xi_{t}^{\prime}}{}^{T}\!\xi_{t}^{\prime} \\
&=& {}^{H}\!\gamma_{t}^{\prime\prime}+{}^{H}\!(R(\varphi \xi,\xi_{t}^{\prime})\gamma_{t}^{\prime})+{}^{T}\!\xi_{t}^{\prime\prime}\\
&=&{}^{H}\!\big(\gamma_{t}^{\prime\prime}+R(\varphi \xi, \xi_{t}^{\prime})\gamma_{t}^{\prime}\big)+{}^{V}\!(\xi_{t}^{\prime\prime}-g(\xi_{t}^{\prime\prime}, \varphi \xi)\xi).
\end{eqnarray*}
If we put $\widehat{\nabla}_{\Gamma_{t}^{\prime}}\Gamma_{t}^{\prime}$ equal to zero, we find $\eqref{eq_5}$.\\
From $\eqref{eq_3}$ we get, $0=\gamma_{t}^{\prime}g(\xi_{t}^{\prime}, \varphi \xi)=g(\xi_{t}^{\prime\prime}, \varphi \xi)+g(\xi_{t}^{\prime}, \varphi \xi_{t}^{\prime})$ then, 
\begin{eqnarray*}
g(\xi_{t}^{\prime\prime}, \varphi \xi)=-g(\xi_{t}^{\prime}, \varphi \xi_{t}^{\prime}),
\end{eqnarray*}
Using $\eqref{eq_3}$ and the second equation of $\eqref{eq_5}$, we find, 
$$\gamma_{t}^{\prime}g(\xi_{t}^{\prime}, \varphi \xi_{t}^{\prime})=2g(\xi_{t}^{\prime\prime}, \varphi \xi_{t}^{\prime})=2g(\xi_{t}^{\prime\prime}, \varphi \xi)g(\xi, \varphi \xi_{t}^{\prime})=0,$$ 
then $g(\xi_{t}^{\prime}, \varphi \xi_{t}^{\prime})=\kappa=const$, from $\eqref{eq_4}$, we find,  $0\leq \kappa \leq1$,\\
hence  $|\gamma_{t}^{\prime}|=\sqrt{1-\rho^{2}}$ and $g(\xi_{t}^{\prime\prime}, \varphi \xi)=-g(\xi_{t}^{\prime}, \varphi \xi_{t}^{\prime})=-\rho^{2}$, where $\rho^{2}=\kappa$.
\end{proof}

\begin{remark}\label{rem_1}	According to \eqref{eq_6}, the geodesics $\Gamma =(\gamma(t),\xi(t))$ of $T^{\varphi}_{1}M$ can be splitted naturally into 3 classes, namely,
\item(1) horizontal geodesics, if $\rho=0$, from \eqref{eq_6}, $|\gamma_{t}^{\prime}|=1$, then from \eqref{eq_4}, we have $\xi_{t}^{\prime}=0$ i.e. $\Gamma$ is generated by parallel vector fields $\xi$ along the geodesics $\gamma$ on the base manifold,
\item(2) vertical geodesics, if $\rho=1$, from \eqref{eq_6}, $|\gamma_{t}^{\prime}|=0$, then $\gamma(t)$ is a constant i.e. $\Gamma$ is geodesic in Euclidean space, (on a fixed fiber), their equations are\, $\xi_{t}^{\prime\prime}=\rho^{2}\xi$. 
\item(3) umbilical (oblique) geodesics corresponding to $0 < \rho < 1$, In this case, $\Gamma$ can be regarded as a vector field $\xi\neq 0$ along the curve $\gamma$. see \cite{Y.S}.
\end{remark}

A curve $\Gamma= (\gamma(t), \xi(t))$ on $TM$ is said to be a horizontal lift of the curve
$\gamma$ on $M$ if and only if $\xi_{t}^{\prime}=0$ \cite{Y.I}.

In general, the horizontal lift $\Gamma= (\gamma(t), \xi(t))$ of the curve $\gamma$ on $M$ does not belong to $T^{\varphi}_{1}M$, we have \;$\xi_{t}^{\prime}=0$, then $0=2g(\xi_{t}^{\prime}, \varphi \xi)=\gamma_{t}^{\prime}g(\xi, \varphi \xi)=0$, hence $g(\xi, \varphi \xi)=const\neq 1$. If $\Gamma\in T^{\varphi}_{1}M$, we get the following Corollary

\begin{corollary}\label{co_1} Let $(M^{2m}, \varphi, g)$ be a para-K\"{a}hler-Norden manifold and  $T^{\varphi}_{1}M$ its $\varphi$-unit tangent bundle equipped with the $\varphi$-Sasaki metric. If  $\Gamma= (\gamma(t), \xi(t))$ is a horizontal lift of $\gamma$ and $\Gamma\in T^{\varphi}_{1}M$, then $\Gamma$ is a geodesic on $T^{\varphi}_{1}M$ if and only if  $\gamma$ is a geodesic on $M$. 	
\end{corollary}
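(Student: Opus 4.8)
The plan is to specialize the geodesic characterization from Theorem \ref{th_2} to the case of a horizontal lift, where by definition $\xi_{t}^{\prime}=0$, and hence also $\xi_{t}^{\prime\prime}=\nabla_{\gamma_{t}^{\prime}}\xi_{t}^{\prime}=0$. I would start by writing down the system \eqref{eq_5} and substituting these vanishing conditions into it. The second equation of \eqref{eq_5} becomes $0=g(0,\varphi\xi)\xi=0$, which is automatically satisfied and imposes no constraint. The first equation becomes $\gamma_{t}^{\prime\prime}=R(\xi_{t}^{\prime},\varphi\xi)\gamma_{t}^{\prime}=R(0,\varphi\xi)\gamma_{t}^{\prime}=0$, since the curvature tensor is linear (and vanishes) in its first slot. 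Thus, under the hypothesis that $\Gamma\in T^{\varphi}_{1}M$ is a horizontal lift, the system \eqref{eq_5} collapses to the single equation $\gamma_{t}^{\prime\prime}=0$, i.e. $\nabla_{\gamma_{t}^{\prime}}\gamma_{t}^{\prime}=0$, which is exactly the condition that $\gamma$ be a geodesic on $M$.

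For the converse direction I would argue that if $\gamma$ is a geodesic on $M$, then $\gamma_{t}^{\prime\prime}=0$, and together with $\xi_{t}^{\prime}=\xi_{t}^{\prime\prime}=0$ (coming from the horizontal-lift assumption) both equations of \eqref{eq_5} hold, so $\Gamma$ is a geodesic on $T^{\varphi}_{1}M$ by Theorem \ref{th_2}. One subtlety worth addressing explicitly is the hypothesis $\Gamma\in T^{\varphi}_{1}M$: as noted in the paragraph preceding the corollary, a generic horizontal lift satisfies $g(\xi,\varphi\xi)=\mathrm{const}$, so requiring this constant to equal $1$ is precisely what places $\Gamma$ on the $\varphi$-unit tangent bundle and makes Theorem \ref{th_2} applicable; I would mention this to make clear the hypothesis is consistent and not vacuous. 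It is also worth remarking that a horizontal lift lying in $T^{\varphi}_{1}M$ is automatically naturally parameterized up to scaling, since from \eqref{eq_4} with $\xi_{t}^{\prime}=0$ we get $|\gamma_{t}^{\prime}|^{2}=1$, matching the $\rho=0$ (horizontal geodesic) case of Remark \ref{rem_1}.

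There is essentially no main obstacle here — the proof is a direct substitution into the already-established Theorem \ref{th_2}. The only thing to be careful about is bookkeeping: making sure that $\xi_{t}^{\prime}=0$ genuinely implies $\xi_{t}^{\prime\prime}=0$ (immediate from the definition $\xi_{t}^{\prime\prime}=\nabla_{\gamma_{t}^{\prime}}\xi_{t}^{\prime}$), and that the curvature term drops out by bilinearity. So the write-up will be short: invoke \eqref{eq_5}, plug in $\xi_{t}^{\prime}=\xi_{t}^{\prime\prime}=0$, observe that the system reduces to $\gamma_{t}^{\prime\prime}=0$, and conclude in both directions.
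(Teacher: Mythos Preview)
Your proposal is correct and matches the paper's (implicit) argument: the corollary is stated without a separate proof precisely because it follows by plugging the horizontal-lift condition $\xi_{t}^{\prime}=0$ (hence $\xi_{t}^{\prime\prime}=0$) into the system \eqref{eq_5} of Theorem~\ref{th_2}, which then collapses to $\gamma_{t}^{\prime\prime}=0$. One small quibble: your remark that \eqref{eq_4} forces $|\gamma_{t}^{\prime}|^{2}=1$ is slightly circular, since \eqref{eq_4} already presupposes that $t$ is an arc-length parameter on $\Gamma$; this side comment is inessential and can be dropped.
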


The curve $\Gamma = (\gamma(t), \gamma_{t}^{\prime}(t))$ is called a natural lift of the curve $\gamma$ on $TM$ \cite{Y.I}.  Thus, we have

\begin{corollary}\label{co_2} Let $(M^{2m}, \varphi, g)$ be a para-K\"{a}hler-Norden manifold and $T^{\varphi}_{1}M$ its $\varphi$-unit tangent bundle equipped with the $\varphi$-Sasaki metric. If  $\Gamma= (\gamma(t), \gamma_{t}^{\prime}(t))$ is a natural lift of $\gamma$ and $\Gamma\in T^{\varphi}_{1}M$, then  $\Gamma$ is a geodesic on $T^{\varphi}_{1}M$ if and only if  $\gamma$ is a geodesic on $M$.
\end{corollary}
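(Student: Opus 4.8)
The plan is to specialize Theorem \ref{th_2} to the natural lift, i.e. to the choice $\xi(t)=\gamma_{t}^{\prime}(t)$, for which $\xi_{t}^{\prime}=\nabla_{\gamma_{t}^{\prime}}\gamma_{t}^{\prime}=\gamma_{t}^{\prime\prime}$ and $\xi_{t}^{\prime\prime}=\gamma_{t}^{\prime\prime\prime}$, and then to treat the two implications separately, using in an essential way that $g$ is a Riemannian (hence positive definite) metric. For the sufficiency I would argue that if $\gamma$ is a geodesic of $M$ then $\gamma_{t}^{\prime\prime}=0$, whence $\xi_{t}^{\prime}=\gamma_{t}^{\prime\prime}=0$, so $\Gamma$ is also a horizontal lift and Corollary \ref{co_1} applies; equivalently, $\gamma_{t}^{\prime\prime}=0$ entails $\gamma_{t}^{\prime\prime\prime}=0$, so both equations of \eqref{eq_5} hold and $\Gamma$ is a geodesic of $(T^{\varphi}_{1}M,g^{\varphi})$.

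For the necessity, suppose $\Gamma=(\gamma,\gamma_{t}^{\prime})$ is a geodesic of $T^{\varphi}_{1}M$. Then \eqref{eq_6} gives $|\gamma_{t}^{\prime}|^{2}=1-\rho^{2}$ for a constant $\rho\in[0,1]$, while $\Gamma\in T^{\varphi}_{1}M$ forces $g(\gamma_{t}^{\prime},\varphi\gamma_{t}^{\prime})=1$. At this point I would use the elementary fact that, because $g$ is positive definite and $\varphi^{2}=\mathrm{id}$, for every tangent vector $v$
\begin{equation*}
g(v,v)-g(v,\varphi v)=g\!\left(v,(\mathrm{id}-\varphi)v\right)=2\,g(v^{-},v^{-})\ \geq\ 0 ,
\end{equation*}
where $v=v^{+}+v^{-}$ with $v^{\pm}\in TM^{\pm}$ the $\varphi$-eigencomponents (mutually $g$-orthogonal by purity of $g$), equality holding precisely when $v^{-}=0$, i.e. $\varphi v=v$. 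Applying this to $v=\gamma_{t}^{\prime}$ gives $(1-\rho^{2})-1\geq 0$, hence $\rho=0$ and, by the equality case, $\varphi\gamma_{t}^{\prime}=\gamma_{t}^{\prime}$.

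It then remains to upgrade $\rho=0$ to $\gamma_{t}^{\prime\prime}=0$. Differentiating $\varphi\gamma_{t}^{\prime}=\gamma_{t}^{\prime}$ along $\gamma$ and using $\nabla\varphi=0$ gives $\varphi\gamma_{t}^{\prime\prime}=\gamma_{t}^{\prime\prime}$; hence, using $g(\xi_{t}^{\prime},\varphi\xi_{t}^{\prime})=\rho^{2}$ (a by-product of the proof of Theorem \ref{th_2}, equivalently of \eqref{eq_6}) with $\xi_{t}^{\prime}=\gamma_{t}^{\prime\prime}$,
\begin{equation*}
|\gamma_{t}^{\prime\prime}|^{2}=g(\gamma_{t}^{\prime\prime},\gamma_{t}^{\prime\prime})=g(\gamma_{t}^{\prime\prime},\varphi\gamma_{t}^{\prime\prime})=\rho^{2}=0 ,
\end{equation*}
so $\gamma_{t}^{\prime\prime}=0$ by positive definiteness of $g$; that is, $\gamma$ is a geodesic of $M$.

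I expect the last step to be the one demanding the most care. One cannot deduce $\gamma_{t}^{\prime\prime}=0$ from $\rho=0$ alone, since the symmetric form $G(X,Y)=g(X,\varphi Y)$ has neutral signature, so $g(\gamma_{t}^{\prime\prime},\varphi\gamma_{t}^{\prime\prime})=0$ does not by itself imply $\gamma_{t}^{\prime\prime}=0$; the argument is saved only by the equality case $\varphi\gamma_{t}^{\prime}=\gamma_{t}^{\prime}$ of the inequality above, which then transfers to $\gamma_{t}^{\prime\prime}$ through $\nabla\varphi=0$. The remaining ingredients — the specialization of Theorem \ref{th_2} and the identification of $\xi_{t}^{\prime},\xi_{t}^{\prime\prime}$ for the natural lift — are routine.
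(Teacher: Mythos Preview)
Your proof is correct. The paper gives no argument for this corollary, presenting it as an immediate consequence of Theorem~\ref{th_2}; the implicit reasoning (cf.\ Remark~\ref{rem_1}(1)) seems to be that $\rho=0$ together with \eqref{eq_4} forces $\xi_{t}^{\prime}=0$. You correctly flag the subtlety: since $G(X,Y)=g(X,\varphi Y)$ has neutral signature, $g(\xi_{t}^{\prime},\varphi\xi_{t}^{\prime})=0$ alone does not give $\xi_{t}^{\prime}=0$. Your route via the inequality $g(v,v)\ge g(v,\varphi v)$ (with equality precisely when $v\in TM^{+}$), combined with $\nabla\varphi=0$ to pass from $\varphi\gamma_{t}^{\prime}=\gamma_{t}^{\prime}$ to $\varphi\gamma_{t}^{\prime\prime}=\gamma_{t}^{\prime\prime}$, closes this gap cleanly for the natural lift and is in fact more careful than the paper's treatment.
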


\begin{remark}\label{re_1}
As a reminder, note that locally we have:
\begin{eqnarray}\label{eq_I}
\gamma_{t}^{\prime\prime}=\sum_{l=1}^{2m} (\dfrac{d^{2}\gamma^{l}}{dt^{2}}+\sum_{i,j=1}^{2m}\dfrac{d\gamma^{i}}{dt}\dfrac{d\gamma^{j}}{dt}\Gamma_{ij}^{l})\frac{\partial}{\partial x^{l}},  
\end{eqnarray}
and
\begin{eqnarray}\label{eq_II}
\xi_{t}^{\prime}=\sum_{l=1}^{2m} (\dfrac{d\xi^{l}}{dt}+\sum_{i,j=1}^{2m}\frac{d\gamma^{j}}{dt}\xi^{i}\Gamma_{ij}^{l})\frac{\partial}{\partial x^{l}}.  
\end{eqnarray}
\end{remark} 

\begin{example}
Let $(\mathbb{R}^{2},\varphi,g)$ be a para-K\"{a}hler-Norden manifold such that
$$g= e^{2x}dx^{2}+e^{2y}dy^{2},\quad \varphi=\left( \begin{array}{ccc}
0 & e^{y-x}\\
e^{x-y} & 0  
\end{array} \right).$$
The non-null Christoffel symbols of the Riemannian connection are:
$$\Gamma_{11}^{1} = \Gamma_{22}^{2} = 1.$$
$1)$ Let  $\gamma$ be a curve such that $\gamma(t)=(x(t), y(t))$, from \eqref{eq_I}, the geodesics $\gamma$ such that $\gamma(0)= (a, b)\in \mathbb{R}^{2}$ and  $\gamma_{t}^{\prime}(0)= (\lambda,\eta)\in \mathbb{R}^{\ast}_{+}\times \mathbb{R}^{\ast}_{+}$ satisfies the system of differential equations, 
\begin{eqnarray*}
\frac{d^{2}\gamma^{l}}{dt^{2}}
+\sum_{i,j=1}^{2}\frac{d\gamma^{i}}{dt}\frac{d\gamma^{j}}{dt}\Gamma_{ij}^{l}=0&\Leftrightarrow& \left\{\begin{array}{lll}
\dfrac{d^{2}x}{dt^{2}} + (\dfrac{dx}{dt})^{2}=0\\\\
\dfrac{d^{2}y}{dt^{2}} + (\dfrac{dy}{dt})^{2}=0
\end{array} \right.\\&\Leftrightarrow& \left\{\begin{array}{lll}
x(t)=a+\ln( 1+\lambda t)\\\\
y(t)=b+\ln( 1+\eta t)
\end{array} \right.
\end{eqnarray*}
Hence, $$\gamma_{t}^{\prime}(t)= \dfrac{\lambda}{1 +\lambda t} \dfrac{\partial}{\partial x}+\dfrac{\eta}{1 +\eta t}\dfrac{\partial}{\partial y},\;\gamma(t)= (a+\ln( 1+\lambda t),b+\ln( 1+\eta t)).$$
On the other hand we have $g(\gamma_{t}^{\prime}, \varphi\gamma_{t}^{\prime})= 2\lambda\eta e^{a+b}$, then for  $\lambda\eta= \dfrac{1}{2e^{a+b}}$,\\
become $\Gamma_{1} =(\gamma(t),\gamma_{t}^{\prime}(t))\in T^{\varphi}_{1}\mathbb{R}^{2}$.\\
Hence from Corollary $\ref{co_2}$, the curve $\Gamma_{1}$ is a geodesic on $T^{\varphi}_{1}\mathbb{R}^{2}$.\\
$2)$ If $ \Gamma_{2} =(\gamma(t),\xi(t))$ is horizontal lift of $\gamma$, such that $\xi(t) =(u(t),v(t))$ i.e. $\xi_{t}^{\prime}=0$, from \eqref{eq_II}, we have 
$$\dfrac{d\xi^{l}}{dt}+\sum_{i,j=1}^{2}\frac{dx^{j}}{dt}\xi^{i}\Gamma_{ij}^{l}=0 \Leftrightarrow \left\{\begin{array}{lll}
\dfrac{du}{dt} + \dfrac{dx}{dt}u=0\\\\
\dfrac{dv}{dt} + \dfrac{dy}{dt}v=0
\end{array} \right.\Leftrightarrow \left\{\begin{array}{lll}
u(t)=\dfrac{h_{1}}{1+\lambda t}\\\\
v(t)=\dfrac{h_{2}}{1+\eta t}
\end{array} \right.$$
Hence $\xi(t)=\dfrac{h_{1}}{1+\lambda t}\dfrac{\partial}{\partial x}+\dfrac{h_{2}}{1+\eta t}\dfrac{\partial}{\partial y}$, where  $h_{1}, h_{2}\in \mathbb{R}.$\\
But $g(\xi^{\prime}, \varphi\xi)= 2h_{1}h_{2} e^{a+b}$, then for  $h_{1}h_{2}= \dfrac{1}{2e^{a+b}}$,\\ 
become $\Gamma_{2} =(\gamma(t),\xi(t))\in T^{\varphi}_{1}\mathbb{R}^{2}$.\\
Hence from Corollary $\ref{co_1}$, the curve $\Gamma_{2}$ is a geodesic on $T^{\varphi}_{1}\mathbb{R}^{2}$.
\end{example}

Let $\Gamma$ be a curve on $TM$, the cure $\gamma=\pi\circ \Gamma$ is called the projection (projected curve) of the curve $\Gamma$ on $M$. 

\begin{theorem}\label{th_3}
Let $(M^{2m}, \varphi, g)$ be a locally symmetric para-K\"{a}hler-Norden manifold, $T^{\varphi}_{1}M$ its $\varphi$-unit tangent bundle equipped with the $\varphi$-Sasaki metric and $\Gamma$ be a non-vertical geodesic on $T^{\varphi}_{1}M$, then all Frenet curvatures of the projected curve $\gamma=\pi\circ \Gamma$ are constants.
\end{theorem}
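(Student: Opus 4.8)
The plan is to reduce the statement, via Theorem~\ref{th_2}, to an assertion about the single curve $\gamma$ in $M$, namely that its velocity obeys a covariant‑constant linear recursion. First I would invoke Theorem~\ref{th_2}: a geodesic $\Gamma=(\gamma,\xi)$ of $T^{\varphi}_{1}M$ satisfies
\[
\gamma_{t}''=R(\xi_{t}',\varphi\xi)\gamma_{t}',\qquad \xi_{t}''=g(\xi_{t}'',\varphi\xi)\,\xi=-\rho^{2}\xi,\qquad |\gamma_{t}'|^{2}=1-\rho^{2}.
\]
Since $\Gamma$ is non‑vertical we have $\rho<1$ by Remark~\ref{rem_1}, so $|\gamma_{t}'|=\sqrt{1-\rho^{2}}$ is a positive constant; hence $\gamma$ is a regular curve and $t$ is, up to this constant factor, its arc‑length parameter. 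It therefore suffices to prove that the Frenet curvatures of $\gamma$ computed with respect to $t$ are constant.

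The key step is to show that the endomorphism field $\Phi:=R(\xi_{t}',\varphi\xi)$ along $\gamma$ is parallel, i.e. $\nabla_{\gamma_{t}'}\Phi=0$. Differentiating and using $\nabla_{\gamma_{t}'}\xi=\xi_{t}'$, $\nabla_{\gamma_{t}'}\xi_{t}'=\xi_{t}''$ and $\nabla\varphi=0$,
\[
\nabla_{\gamma_{t}'}\Phi=(\nabla_{\gamma_{t}'}R)(\xi_{t}',\varphi\xi)+R(\xi_{t}'',\varphi\xi)+R(\xi_{t}',\varphi\xi_{t}').
\]
The first term vanishes because $M$ is locally symmetric. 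For the remaining two I would record the elementary identity $R(X,\varphi X)=0$, valid on any para‑K\"ahler--Norden manifold: purity of $R$ gives $R(\varphi X,X)=R(X,\varphi X)$, while antisymmetry gives $R(\varphi X,X)=-R(X,\varphi X)$. Hence $R(\xi_{t}',\varphi\xi_{t}')=0$ and $R(\xi_{t}'',\varphi\xi)=-\rho^{2}R(\xi,\varphi\xi)=0$, so $\nabla_{\gamma_{t}'}\Phi=0$. I would also note that $\Phi$ is skew‑symmetric with respect to $g$, since $g(R(X,Y)Z,W)=-g(R(X,Y)W,Z)$.

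Finally I would conclude formally. Because $\nabla_{\gamma_{t}'}\gamma_{t}'=\Phi\gamma_{t}'$ with $\Phi$ parallel, an immediate induction (using that $\Phi^{k}$ is then parallel too) gives $\nabla^{k}_{\gamma_{t}'}\gamma_{t}'=\Phi^{k}\gamma_{t}'$ for all $k\ge 0$. Skew‑symmetry of $\Phi$ then yields
\[
\frac{d}{dt}\,g\!\left(\Phi^{i}\gamma_{t}',\Phi^{j}\gamma_{t}'\right)=g\!\left(\Phi^{i+1}\gamma_{t}',\Phi^{j}\gamma_{t}'\right)+g\!\left(\Phi^{i}\gamma_{t}',\Phi^{j+1}\gamma_{t}'\right)=0,
\]
so every entry of the Gram matrix of the successive covariant derivatives $\gamma_{t}',\gamma_{t}'',\gamma_{t}''',\dots$ is constant. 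Since the Frenet frame of $\gamma$ is obtained from these derivatives by Gram--Schmidt and each Frenet curvature is determined algebraically by the entries of that Gram matrix together with $|\gamma_{t}'|^{2}=1-\rho^{2}$, all Frenet curvatures of $\gamma$ are constant. The main obstacle — and essentially the only non‑formal point — is the parallelism of $\Phi$: the argument hinges on seeing that local symmetry kills $(\nabla_{\gamma_{t}'}R)(\xi_{t}',\varphi\xi)$ while purity of the curvature forces the two correction terms $R(\xi_{t}'',\varphi\xi)$ and $R(\xi_{t}',\varphi\xi_{t}')$ to vanish; without $\nabla R=0$ one is left with a genuinely $t$‑dependent operator $\Phi$ and the Gram entries need not be constant.
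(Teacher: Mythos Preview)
Your proof is correct and follows essentially the same route as the paper: both establish that the endomorphism $\Phi=R(\xi_{t}',\varphi\xi)$ is parallel along $\gamma$ (using local symmetry for the $\nabla R$ term and the purity identity $R(X,\varphi X)=0$ together with $\xi_{t}''=-\rho^{2}\xi$ for the other two), deduce $\gamma_{t}^{(p+1)}=\Phi\,\gamma_{t}^{(p)}$, and then extract the constancy of the Frenet curvatures from the skew-symmetry of $\Phi$. Your write-up is in fact slightly cleaner---you make the identity $R(X,\varphi X)=0$ explicit (the paper silently drops the terms $R(\xi_{t}'',\varphi\xi)\gamma_{t}'$ and $R(\xi_{t}',\varphi\xi_{t}')\gamma_{t}'$), and you observe that the full Gram matrix of the successive derivatives is constant, whereas the paper shows only that the norms $|\gamma_{t}^{(p)}|$ are constant and then unwinds the Frenet formulas one step at a time.
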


\begin{proof} Using the first equation of $\eqref{eq_5}$, we have\; $\gamma_{t}^{\prime\prime}=R(\xi_{t}^{\prime}, \varphi \xi)\gamma_{t}^{\prime}.$\\
It is easy to see that 
\begin{eqnarray*}
\gamma_{t}^{\prime}g(\gamma_{t}^{\prime}, \gamma_{t}^{\prime})=2g(\gamma_{t}^{\prime\prime}, \gamma_{t}^{\prime})=2g(R(\xi_{t}^{\prime}, \varphi \xi)\gamma_{t}^{\prime}, \gamma_{t}^{\prime})=0,
\end{eqnarray*} 
hence,\; $|\gamma_{t}^{\prime}|= const$.\\ 
Calculate the third derivative, we get 
\begin{eqnarray*}
\gamma_{t}^{\prime\prime\prime}&=&(\nabla_{\gamma_{t}^{\prime}}R)(\xi_{t}^{\prime}, \varphi \xi)\gamma_{t}^{\prime}+R(\xi_{t}^{\prime\prime}, \varphi \xi)\gamma_{t}^{\prime}+R(\xi_{t}^{\prime}, \varphi \xi_{t}^{\prime})\gamma_{t}^{\prime}+R(\xi_{t}^{\prime}, \varphi \xi)\gamma_{t}^{\prime\prime}\\
&=&R(\xi_{t}^{\prime}, \varphi \xi)\gamma_{t}^{\prime\prime}=R^{2}(\xi_{t}^{\prime}, \varphi \xi)\gamma_{t}^{\prime}.
\end{eqnarray*}
Since 
\begin{eqnarray*}
\gamma_{t}^{\prime}g(\gamma_{t}^{\prime\prime}, \gamma_{t}^{\prime\prime})=2g(\gamma_{t}^{\prime\prime\prime}, \gamma_{t}^{\prime\prime})=2g(R(\xi_{t}^{\prime}, \varphi \xi)\gamma_{t}^{\prime\prime}, \gamma_{t}^{\prime\prime})=0,
\end{eqnarray*} 
hence,\; $|\gamma_{t}^{\prime\prime}|= const$.\\ 	
Continuing the process, we obtain 
\begin{equation}\label{eq_7}
\gamma_{t}^{(p+1)}=R(\xi_{t}^{\prime}, \varphi \xi)\gamma_{t}^{(p)}=R^{p}(\xi_{t}^{\prime}, \varphi \xi)\gamma_{t}^{\prime},\quad p\geq1
\end{equation}
and
\begin{equation*}
\gamma_{t}^{\prime}g(\gamma_{t}^{(p)}, \gamma_{t}^{(p)})=2g(\gamma_{t}^{(p+1)},\gamma_{t}^{(p)})=2g(R(\xi_{t}^{\prime}, \varphi \xi)\gamma_{t}^{(p)},\gamma_{t}^{(p)})=0.
\end{equation*}
Thus, we get 
\begin{equation}\label{eq_8}
|\gamma_{t}^{(p)}|=const,\quad p\geq 1.  
\end{equation}
Denote by $s$ an arc length parameter on $\gamma $, i.e. $(|\gamma_{s}^{\prime}|=1)$.\\
Then, $\gamma_{t}^{\prime}=
\gamma_{s}^{\prime}\frac{d\,s}{dt}$, and using $\eqref{eq_6}$, we get
\begin{equation}\label{eq_9}
\frac{d\,s}{dt}=\sqrt{1-\rho^{2}}=const,	
\end{equation}
Let $\nu_{1} = \gamma_{s}^{\prime}$ be the first vector in the Frenet frame $\nu_{1}, \ldots, \nu_{2m-1}$ along $\gamma$ and let $k_{1}, \ldots, k_{2m-1}$ the Frenet curvatures of $\gamma$. Then the Frenet formulas verify
\begin{equation}\label{eq_10}
\left\{
\begin{array}{lll}
(\nu_{1})^{\prime}_{s}&=&k_{1}\nu_{2}\\
(\nu_{i})^{\prime}_{s}&=&-k_{i-1}\nu_{i-1}+k_{i}\nu_{i+1},\quad 2\leq i\leq 2m-2\\
(\nu_{2m-1})^{\prime}_{s}&=&-k_{2m-2}\nu_{2m-2}
\end{array}
\right.
\end{equation}
From $\eqref{eq_6}$, we have 
\begin{eqnarray}\label{eq_11}
\gamma_{t}^{\prime}=\gamma_{s}^{\prime}\frac{d\,s}{dt}=\sqrt{1-\rho^{2}}\,\nu_{1}.
\end{eqnarray}
Using $\eqref{eq_9}$ and the Frenet formulas $\eqref{eq_10}$, we obtain
\begin{eqnarray}\label{eq_12}
\gamma_{t}^{\prime\prime}=\sqrt{1-\rho^{2}}\,(\nu_{1})^{\prime}_{t}=\sqrt{1-\rho^{2}}\,(\nu_{1})^{\prime}_{s}\frac{d\,s}{dt}=(1-\rho^{2})k_{1}\nu_{2}.
\end{eqnarray}
Now $\eqref{eq_8}$ implies $k_{1} =$ const.\\
Next, in a similar way, we have
\begin{eqnarray}\label{eq_13}
\gamma_{t}^{\prime\prime\prime}&=&(1-\rho^{2})k_{1}(\nu_{2})^{\prime}_{t}=(1-\rho^{2})k_{1}(\nu_{2})^{\prime}_{s}\frac{d\,s}{dt}\nonumber\\
&=&-(1-\rho^{2})^{\frac{3}{2}}k_{1}^{2}\nu_{1}+(1-\rho^{2})^{\frac{3}{2}}k_{1}k_{2}\nu_{3}.
\end{eqnarray}
and again $\eqref{eq_8}$ implies $k_{2}=$ const.\\
By continuing the process, we finish the proof.
\end{proof}

\begin{lemma}\label{lem_1}
Let $(M^{2m}, \varphi, g)$ be a para-K\"{a}hler-Norden manifold and $T^{\varphi}_{1}M$ its $\varphi$-unit tangent bundle equipped with the $\varphi$-Sasaki metric. If $\Gamma=(\gamma(t),\xi(t))$ is a curve on $T^{\varphi}_{1}M$, then we have
\item $(1)$ $\Phi=(\gamma(t),\varphi \xi(t))$ is a curve on $T^{\varphi}_{1}M$.
\item $(2)$ $\Phi$ is a geodesic on $T^{\varphi}_{1}M$ if and only if $\Gamma$ is a geodesic on $T^{\varphi}_{1}M$.
\end{lemma}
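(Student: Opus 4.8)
The plan is to reduce everything to the two structural facts recalled in the preliminaries: $\varphi^{2}=\mathrm{id}$ together with the purity of $g$ (so that $g(\varphi X,Y)=g(X,\varphi Y)$), and $\nabla\varphi=0$ together with the purity of the Riemannian curvature tensor (so that $R(\varphi X,Y)Z=R(X,\varphi Y)Z$ and $\varphi$ can be pulled through covariant differentiation).

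For part $(1)$, I would simply check the defining equation of $T^{\varphi}_{1}M$ for the curve $(\gamma(t),\varphi\xi(t))$, namely $g(\varphi\xi,\varphi(\varphi\xi))=1$. Since $\varphi^{2}=\mathrm{id}$, the left-hand side equals $g(\varphi\xi,\xi)$, and by purity of $g$ this is $g(\xi,\varphi\xi)$, which is $1$ because $\Gamma=(\gamma(t),\xi(t))$ is assumed to lie on $T^{\varphi}_{1}M$. Hence $\Phi$ is a curve on $T^{\varphi}_{1}M$.

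For part $(2)$, write $\zeta=\varphi\xi$, so that $\Phi=(\gamma(t),\zeta(t))$. Because $\nabla\varphi=0$, differentiation along $\gamma$ commutes with $\varphi$, giving $\zeta_{t}^{\prime}=\nabla_{\gamma_{t}^{\prime}}(\varphi\xi)=\varphi\xi_{t}^{\prime}$ and likewise $\zeta_{t}^{\prime\prime}=\varphi\xi_{t}^{\prime\prime}$. Applying Theorem \ref{th_2} to $\Phi$, it is a geodesic on $T^{\varphi}_{1}M$ if and only if $\gamma_{t}^{\prime\prime}=R(\zeta_{t}^{\prime},\varphi\zeta)\gamma_{t}^{\prime}$ and $\zeta_{t}^{\prime\prime}=g(\zeta_{t}^{\prime\prime},\varphi\zeta)\zeta$. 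Substituting $\zeta=\varphi\xi$, $\zeta_{t}^{\prime}=\varphi\xi_{t}^{\prime}$, $\zeta_{t}^{\prime\prime}=\varphi\xi_{t}^{\prime\prime}$ and using $\varphi^{2}=\mathrm{id}$, the first equation becomes $\gamma_{t}^{\prime\prime}=R(\varphi\xi_{t}^{\prime},\xi)\gamma_{t}^{\prime}$, which by purity of $R$ in its first two arguments equals $R(\xi_{t}^{\prime},\varphi\xi)\gamma_{t}^{\prime}$ — exactly the first geodesic equation for $\Gamma$. For the second equation, $\varphi\xi_{t}^{\prime\prime}=g(\varphi\xi_{t}^{\prime\prime},\xi)\varphi\xi$; applying $\varphi$ to both sides, using $\varphi^{2}=\mathrm{id}$, and rewriting $g(\varphi\xi_{t}^{\prime\prime},\xi)=g(\xi_{t}^{\prime\prime},\varphi\xi)$ by purity of $g$, yields $\xi_{t}^{\prime\prime}=g(\xi_{t}^{\prime\prime},\varphi\xi)\xi$, which is the second geodesic equation for $\Gamma$. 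Since $\varphi$ is an involution, the transformation $\xi\mapsto\varphi\xi$ carries $\Phi$ back to $\Gamma$, so the forward implication applied to $\Phi$ immediately gives the converse; alternatively, each manipulation above is reversible. This proves the equivalence.

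The only step requiring genuine input beyond bookkeeping is the passage from $R(\varphi\xi_{t}^{\prime},\xi)$ to $R(\xi_{t}^{\prime},\varphi\xi)$, which uses precisely the fact — stated in Section~3 — that on a para-K\"{a}hler-Norden manifold the Riemannian curvature tensor is pure with respect to $\varphi$; everything else is a routine consequence of $\varphi^{2}=\mathrm{id}$, $\nabla\varphi=0$, and the purity of $g$.
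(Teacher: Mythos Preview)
Your proof is correct and follows essentially the same line as the paper's own argument: part~(1) is identical, and for part~(2) both you and the paper reduce the geodesic equations for $\Phi=(\gamma,\varphi\xi)$ to those for $\Gamma=(\gamma,\xi)$ using $\nabla\varphi=0$ (so that $\zeta_{t}^{\prime}=\varphi\xi_{t}^{\prime}$, $\zeta_{t}^{\prime\prime}=\varphi\xi_{t}^{\prime\prime}$) together with the purity of $R$ to pass from $R(\varphi\xi_{t}^{\prime},\xi)$ to $R(\xi_{t}^{\prime},\varphi\xi)$. The only cosmetic difference is that the paper re-derives $\widehat{\nabla}_{\Phi_{t}^{\prime}}\Phi_{t}^{\prime}$ from the connection formulas of Theorem~\ref{th_1}, whereas you invoke the geodesic characterization of Theorem~\ref{th_2} directly; the substance is the same.
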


\begin{proof}	
\item $(1)$ We put $\mu(t)=\varphi \xi(t)$, since $\Gamma =(\gamma(t),\xi(t))\in T^{\varphi}_{1}M$, then  $g(\xi,\varphi \xi)=1$. \\ On the other hand, 	 
$g(\mu,\varphi \mu)=g(\varphi \xi,\varphi(\varphi \xi))=g(\varphi \xi,\xi)=1$ i.e. $$\Phi(t) =(\gamma(t),\mu(t))\in T^{\varphi}_{1}M.$$
\item $(2)$ In a similar way proof of $\eqref{eq_5}$, and using  $\mu_{t}^{\prime}=\varphi \xi_{t}^{\prime}$ and $\mu_{t}^{\prime\prime}=\varphi \xi_{t}^{\prime\prime}$, we have
\begin{eqnarray*}
\widehat{\nabla}_{\Phi_{t}^{\prime}}\Phi_{t}^{\prime} &=& {}^{H}\!(\gamma_{t}^{\prime\prime}+R(\varphi \mu,\mu_{t}^{\prime})\gamma_{t}^{\prime})+{}^{V}\!(\mu_{t}^{\prime\prime}+g(\mu_{t}^{\prime}, \varphi \mu_{t}^{\prime})\mu)\\
&=&{}^{H}\!\big(\gamma_{t}^{\prime\prime}+R(\xi,\varphi \xi_{t}^{\prime})\gamma_{t}^{\prime}\big)+{}^{V}\!(\varphi \xi_{t}^{\prime\prime}+g(\varphi \xi_{t}^{\prime}, \xi_{t}^{\prime})\varphi \xi).
\end{eqnarray*}
Since the Riemannian curvature tensor is pure, we get
\begin{eqnarray*}
\widehat{\nabla}_{\Phi_{t}^{\prime}}\Phi_{t}^{\prime}&=&{}^{H}\!\big(\gamma_{t}^{\prime\prime}+R(\varphi \xi,\xi_{t}^{\prime})\gamma_{t}^{\prime}\big)+{}^{V}\!(\varphi (\xi_{t}^{\prime\prime}+g(\xi_{t}^{\prime}, \varphi \xi_{t}^{\prime}) \xi)),
\end{eqnarray*}
hence,
\begin{eqnarray*}
\widehat{\nabla}_{\Phi_{t}^{\prime}}\Phi_{t}^{\prime}=0 &\Leftrightarrow&\left\{
\begin{array}{lll}
\gamma_{t}^{\prime\prime}&=&-R(\varphi \xi,\xi_{t}^{\prime})\gamma_{t}^{\prime}\\
\varphi \xi_{t}^{\prime\prime}&=&-g(\xi_{t}^{\prime}, \varphi \xi_{t}^{\prime})\varphi \xi
\end{array}\right.\\
&\Leftrightarrow& \left\{\begin{array}{lll}
\gamma_{t}^{\prime\prime}&=&R(\xi_{t}^{\prime},\varphi \xi)\gamma_{t}^{\prime}\\
\xi_{t}^{\prime\prime}&=&-g(\xi_{t}^{\prime}, \varphi \xi_{t}^{\prime}) \xi
\end{array}
\right.\\
&\Leftrightarrow& \widehat{\nabla}_{\Gamma_{t}^{\prime}}\Gamma_{t}^{\prime}=0.
\end{eqnarray*}
\end{proof}

From Theorem \ref{th_3} and Lemma \ref{lem_1}, we have the following theorem 

\begin{theorem}\label{th_4}
Let $(M^{2m}, \varphi, g)$ be a locally symmetric para-K\"{a}hler-Norden manifold, $T^{\varphi}_{1}M$ its $\varphi$-unit tangent bundle equipped with the $\varphi$-Sasaki metric and $\Gamma =(\gamma(t), \xi(t))$ be a non-vertical geodesic on $T^{\varphi}_{1}M$ then, all Frenet curvatures  of the projected curve $\pi\circ \Phi$ are constants, where  $\Phi =(\gamma(t),\varphi \xi(t))$.
\end{theorem}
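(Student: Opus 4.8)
The plan is to obtain Theorem \ref{th_4} as an immediate consequence of Lemma \ref{lem_1} together with Theorem \ref{th_3}; the only point that needs a word of justification is that the hypothesis ``non-vertical'' is inherited by $\Phi$ from $\Gamma$.

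First I would invoke Lemma \ref{lem_1}: since $\Gamma=(\gamma(t),\xi(t))$ is a geodesic on $T^{\varphi}_{1}M$, part $(2)$ gives that $\Phi=(\gamma(t),\varphi\xi(t))$ is also a geodesic on $T^{\varphi}_{1}M$, while part $(1)$ guarantees that $\Phi$ indeed lies on $T^{\varphi}_{1}M$. Next I would check that $\Phi$ is non-vertical. Writing $\mu(t)=\varphi\xi(t)$, formula \eqref{eq_2} applied to $\Phi$ yields $\Phi_{t}^{\prime}={}^{H}\!\gamma_{t}^{\prime}+{}^{T}\!\mu_{t}^{\prime}$, so $d\pi(\Phi_{t}^{\prime})=\gamma_{t}^{\prime}=d\pi(\Gamma_{t}^{\prime})$; hence $\Phi$ is non-vertical precisely when $\Gamma$ is, equivalently (by the classification in Remark \ref{rem_1}) when the base curve $\gamma$ is non-constant. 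Thus $\Phi$ is a non-vertical geodesic on the $\varphi$-unit tangent bundle over the locally symmetric para-K\"{a}hler-Norden manifold $(M^{2m},\varphi,g)$.

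Finally, applying Theorem \ref{th_3} to $\Phi$ shows that all Frenet curvatures of the projected curve $\pi\circ\Phi$ are constants, which is the assertion. There is essentially no obstacle here: the analytic content has already been packaged into Lemma \ref{lem_1} (passing from $\xi$ to $\varphi\xi$ preserves both membership in $T^{\varphi}_{1}M$ and the geodesic property, using purity of the curvature tensor) and into Theorem \ref{th_3} (the iterated-derivative argument $\gamma_{t}^{(p+1)}=R(\xi_{t}^{\prime},\varphi\xi)\gamma_{t}^{(p)}$ together with the Frenet formulas), and the only thing to watch is that these two facts compose, i.e.\ that the hypotheses of Theorem \ref{th_3} transfer verbatim to $\Phi$.
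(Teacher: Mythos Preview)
Your argument is correct and matches the paper's own approach exactly: the paper derives Theorem~\ref{th_4} directly ``from Theorem~\ref{th_3} and Lemma~\ref{lem_1}'', and you have supplied precisely this composition, together with the (harmless) extra remark that non-verticality passes from $\Gamma$ to $\Phi$ because both project to the same base curve $\gamma$.
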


Now we study the geodesics on $\varphi$-unit tangent bundle with the $\varphi$-Sasaki metric over para-K\"{a}hler-Norden manifold of constant sectional curvature.\\
From Theorem \ref{th_2}, we have the following

\begin{corollary}\label{co_3}
Let $(M^{2m}, \varphi, g)$ be a para-K\"{a}hler-Norden manifold of constant curvature $c\neq0$, $T^{\varphi}_{1}M$ its $\varphi$-unit tangent bundle equipped with the $\varphi$-Sasaki metric  and $\Gamma =(\gamma(t),\xi(t))$  be a curve on $T^{\varphi}_{1}M$. Then $\Gamma$ is a geodesic on $T^{\varphi}_{1}M$ if and only if
\begin{equation}\label{eq_14}
\left\{
\begin{array}{lll}
\gamma_{t}^{\prime\prime}&=&cg(\varphi \xi, \gamma_{t}^{\prime})\xi_{t}^{\prime}-cg(\xi_{t}^{\prime}, \gamma_{t}^{\prime}) \varphi \xi\\
\xi_{t}^{\prime\prime}&=&-g(\xi_{t}^{\prime}, \varphi \xi_{t}^{\prime}) \xi
\end{array}
\right.
\end{equation}
\end{corollary}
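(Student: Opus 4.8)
The plan is to specialize Theorem \ref{th_2} to a base manifold of constant sectional curvature $c\neq 0$, for which the Riemann curvature operator is
\[
R(X,Y)Z = c\big(g(Y,Z)X - g(X,Z)Y\big).
\]
Substituting $X=\xi_{t}^{\prime}$, $Y=\varphi \xi$, $Z=\gamma_{t}^{\prime}$ gives
\[
R(\xi_{t}^{\prime},\varphi \xi)\gamma_{t}^{\prime}
= c\,g(\varphi \xi,\gamma_{t}^{\prime})\,\xi_{t}^{\prime} - c\,g(\xi_{t}^{\prime},\gamma_{t}^{\prime})\,\varphi \xi ,
\]
so that the first equation of \eqref{eq_5} turns into exactly the first line of \eqref{eq_14}. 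This is the only place where the constant-curvature hypothesis is used.

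For the second equation, I would combine the second line of \eqref{eq_5}, namely $\xi_{t}^{\prime\prime}=g(\xi_{t}^{\prime\prime},\varphi \xi)\xi$, with the identity $g(\xi_{t}^{\prime\prime},\varphi \xi) = -g(\xi_{t}^{\prime},\varphi \xi_{t}^{\prime})$ recorded in \eqref{eq_6} (itself obtained by differentiating \eqref{eq_3} and using $\nabla\varphi=0$). Replacing $g(\xi_{t}^{\prime\prime},\varphi \xi)$ accordingly yields $\xi_{t}^{\prime\prime} = -g(\xi_{t}^{\prime},\varphi \xi_{t}^{\prime})\,\xi$, which is the second line of \eqref{eq_14}. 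Together with the previous paragraph this establishes the ``only if'' direction.

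For the converse, suppose \eqref{eq_14} holds. The displayed curvature identity shows that the first line of \eqref{eq_14} is precisely $\gamma_{t}^{\prime\prime}=R(\xi_{t}^{\prime},\varphi \xi)\gamma_{t}^{\prime}$, i.e. the first equation of \eqref{eq_5}. For the second equation of \eqref{eq_5}, I would take the $g(\,\cdot\,,\varphi \xi)$-inner product of $\xi_{t}^{\prime\prime} = -g(\xi_{t}^{\prime},\varphi \xi_{t}^{\prime})\,\xi$ and use the defining relation $g(\xi,\varphi \xi)=1$ of $T^{\varphi}_{1}M$ to obtain $g(\xi_{t}^{\prime\prime},\varphi \xi) = -g(\xi_{t}^{\prime},\varphi \xi_{t}^{\prime})$; substituting this back gives $\xi_{t}^{\prime\prime} = g(\xi_{t}^{\prime\prime},\varphi \xi)\,\xi$. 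Hence \eqref{eq_5} holds, and Theorem \ref{th_2} then guarantees that $\Gamma$ is a geodesic on $T^{\varphi}_{1}M$.

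The argument is essentially a direct substitution, so I do not expect a genuine obstacle; the only point requiring a little care is the equivalence between the two presentations of the vertical equation — $\xi_{t}^{\prime\prime}=g(\xi_{t}^{\prime\prime},\varphi \xi)\xi$ in \eqref{eq_5} and $\xi_{t}^{\prime\prime}=-g(\xi_{t}^{\prime},\varphi \xi_{t}^{\prime})\xi$ in \eqref{eq_14} — which rests on the $\varphi$-unit constraint $g(\xi,\varphi \xi)=1$ together with its first derivative \eqref{eq_3}.
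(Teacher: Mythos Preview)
Your proof is correct and follows exactly the approach the paper intends: the corollary is stated in the paper with no proof beyond ``From Theorem \ref{th_2}, we have the following,'' and you have simply filled in the straightforward substitution of the constant-curvature formula $R(X,Y)Z=c(g(Y,Z)X-g(X,Z)Y)$ into the first equation of \eqref{eq_5}, together with the replacement $g(\xi_{t}^{\prime\prime},\varphi\xi)=-g(\xi_{t}^{\prime},\varphi\xi_{t}^{\prime})$ (valid for any curve on $T^{\varphi}_{1}M$ by differentiating \eqref{eq_3}) in the second equation. Your explicit treatment of the converse direction is more careful than anything the paper writes, but it is the natural and intended argument.
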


\begin{theorem}\label{th_5}
Let $(\mathbb{R}^{2m}, \varphi, <,>)$ be a para-K\"{a}hler-Norden real euclidean space , $T^{\varphi}_{1}\mathbb{R}^{2m}$ its $\varphi$-unit tangent bundle equipped with the $\varphi$-Sasaki metric. Any oblique geodesics $\Gamma =(\gamma(t),\xi(t))$ on $T^{\varphi}_{1}\mathbb{R}^{2m}$ is the following form
\begin{equation}\label{eq_15}
\left\{
\begin{array}{lll}
\gamma(t)&=&c_{1}t+c_{2}\\
\xi(t)&=&c_{3}\cos (\rho t)+c_{4}\sin(\rho t)
\end{array}
\right.
\end{equation}
where $\rho = const$ and $0< \rho <1$ and $c_{1}, c_{2}, c_{3}, c_{4}$ are real constants.
\end{theorem}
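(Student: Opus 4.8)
The plan is to specialize the geodesic equations of Theorem~\ref{th_2} to the flat base. On the para-K\"ahler--Norden Euclidean space $(\mathbb{R}^{2m},\varphi,\langle\,,\rangle)$ the Levi--Civita connection is the standard flat connection, so all Christoffel symbols vanish and the curvature tensor $R$ is identically zero; moreover, along any curve $\Gamma=(\gamma(t),\xi(t))$ the covariant derivatives $\gamma_t',\gamma_t'',\xi_t',\xi_t''$ coincide with the componentwise ordinary derivatives $\frac{d\gamma}{dt},\frac{d^2\gamma}{dt^2},\frac{d\xi}{dt},\frac{d^2\xi}{dt^2}$ in Cartesian coordinates. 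Note that Corollary~\ref{co_3} does not apply here, since it assumes $c\neq 0$; one goes back to Theorem~\ref{th_2} directly.

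First I would set $R\equiv 0$ in system~\eqref{eq_5}. The first equation collapses to $\gamma_t''=0$, i.e. $\frac{d^2\gamma}{dt^2}=0$, whose general solution is the affine line $\gamma(t)=c_1t+c_2$ with $c_1,c_2\in\mathbb{R}^{2m}$; by~\eqref{eq_6} (equivalently~\eqref{eq_4}) the speed normalization forces $\langle c_1,c_1\rangle=1-\rho^2>0$, so in particular $c_1\neq 0$, which is where obliqueness is used. The second equation of~\eqref{eq_5} combined with the second line of~\eqref{eq_6} reads $\xi_t''=g(\xi_t'',\varphi\xi)\xi=-\rho^2\xi$, i.e. the constant-coefficient linear ODE $\frac{d^2\xi}{dt^2}+\rho^2\xi=0$ in $\mathbb{R}^{2m}$. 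Since $0<\rho<1$ we have $\rho\neq 0$, and the general solution is $\xi(t)=c_3\cos(\rho t)+c_4\sin(\rho t)$ with $c_3,c_4\in\mathbb{R}^{2m}$, which is exactly~\eqref{eq_15}.

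Finally I would record the admissibility relations on the constants coming from $\Gamma\in T^{\varphi}_{1}\mathbb{R}^{2m}$ and from~\eqref{eq_3}--\eqref{eq_4}: plugging the explicit $\xi(t)$ into $\langle\xi,\varphi\xi\rangle\equiv 1$ and comparing the coefficients of $\cos^2\rho t$, $\sin^2\rho t$ and $\sin\rho t\cos\rho t$ (using the purity $\langle\varphi X,Y\rangle=\langle X,\varphi Y\rangle$) gives $\langle c_3,\varphi c_3\rangle=\langle c_4,\varphi c_4\rangle=1$, $\langle c_3,\varphi c_4\rangle=0$, after which~\eqref{eq_3} and $g(\xi_t',\varphi\xi_t')\equiv\rho^2$ hold automatically; conversely these relations together with $\langle c_1,c_1\rangle=1-\rho^2$ turn~\eqref{eq_15} into a genuine oblique geodesic. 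The computations are elementary; the only point that needs care is the bookkeeping between the base norm (taken with $g=\langle\,,\rangle$) and the $G$-pairings $G(\cdot,\cdot)=\langle\varphi\cdot,\cdot\rangle$ coming from the vertical/tangential part of $g^{\varphi}$. There is no genuine analytic obstacle here: the statement is essentially Theorem~\ref{th_2} read off in the flat case, where both the curvature term and all Christoffel symbols drop out and the $\xi$-equation becomes a harmonic oscillator.
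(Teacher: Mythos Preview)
Your proof is correct and follows essentially the same approach as the paper: set $R\equiv 0$ in \eqref{eq_5}--\eqref{eq_6} to obtain $\gamma_t''=0$ and $\xi_t''=-\rho^{2}\xi$, then integrate. Your additional discussion of the admissibility constraints on $c_1,c_3,c_4$ is a welcome complement, since the paper's proof omits these details entirely.
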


\begin{proof}
 In the case of Euclidean space we have $R = 0$, then, using \eqref{eq_5} and \eqref{eq_6} , we get
\begin{equation*}
\left\{
\begin{array}{lll}
\gamma_{t}^{\prime\prime}&=&0\\
\xi_{t}^{\prime\prime}&=&-\rho^{2} \xi
\end{array}
\right.\Leftrightarrow\left\{
\begin{array}{lll}
\gamma(t)&=&c_{1}t+c_{2}\\
\xi(t)&=&c_{3}\cos (\rho t)+c_{4}\sin(\rho t)
\end{array}
\right.
\end{equation*}	
\end{proof}

Define a power of curvature operator $R^{p}(X, Y)$ recurrently in the following way:
$$R^{p}(X, Y)Z=R^{p-1}(X, Y)R(X, Y)Z,$$
for any vector fields $X$ and $Y$  on $M$, where  $p\geq 2$.

\begin{lemma} \label{lem_2}\cite{Y.S} Let $(M,g)$ be a Riemannian manifold of constant
curvature $c$, then we have
\begin{equation*}
R^{p}(X, Y)=\left\{
\begin{array}{lll}
(-b^{2}c^{2})^{k-1}R(X,Y), \;\; \textit{for}\;\; p = 2k-1\\\
(-b^{2}c^{2})^{k-1}R^{2}(X,Y), \;\; \textit{for}\;\; p = 2k
\end{array}
\right.
\end{equation*}
for any vector fields $X$ and $Y$  on $M$, where  $k\geq 2$ and $b^{2}=|X|^{2}|Y|^{2}-g(X,Y)^{2}$.
\end{lemma}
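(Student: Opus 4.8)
The plan is to reduce the whole statement to a computation with a single endomorphism of the $2$-plane spanned by $X$ and $Y$. Since $(M,g)$ has constant curvature $c$, its curvature operator has the form $R(X,Y)Z = c\big(g(Y,Z)X - g(X,Z)Y\big)$ (the overall sign convention is immaterial, as the identity to be proved involves only $R$ and $R^2$). Two elementary facts are clear from this: $R(X,Y)Z$ always lies in $V := \mathrm{span}\{X,Y\}$; and if $X$ and $Y$ are linearly dependent then $R(X,Y) = 0$ while also $b^2 = |X|^2|Y|^2 - g(X,Y)^2 = 0$, so both sides of the asserted formulas vanish. I may therefore assume $X,Y$ linearly independent, so that $\{X,Y\}$ is a basis of $V$ and $R(X,Y)$ restricts to an endomorphism $A := R(X,Y)|_V$ of $V$.

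First I would compute $A$ on the basis $\{X,Y\}$: from the formula above, $R(X,Y)X = c\,g(X,Y)\,X - c|X|^2\,Y$ and $R(X,Y)Y = c|Y|^2\,X - c\,g(X,Y)\,Y$. Hence $A$ has zero trace and determinant $c^2\big(|X|^2|Y|^2 - g(X,Y)^2\big) = b^2c^2$, so by the Cayley--Hamilton theorem $A^2 = -b^2c^2\,\mathrm{Id}_V$; equivalently, a direct two-line computation gives $R^2(X,Y)X = -b^2c^2\,X$ and $R^2(X,Y)Y = -b^2c^2\,Y$. Thus $R^2(X,Y)$ acts as the scalar $-b^2c^2$ on all of $V$.

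Next I would establish the two-step recursion $R^{p+2}(X,Y) = -b^2c^2\,R^{p}(X,Y)$ for all $p\ge 1$. For $p\ge 1$ and any vector field $Z$ we have $R^{p}(X,Y)Z = R(X,Y)\big(R^{p-1}(X,Y)Z\big)\in V$, and applying $R^2(X,Y) = -b^2c^2\,\mathrm{Id}_V$ on $V$ gives $R^{p+2}(X,Y)Z = R^2(X,Y)\big(R^{p}(X,Y)Z\big) = -b^2c^2\,R^{p}(X,Y)Z$. Finally, starting from $R^1(X,Y) = R(X,Y)$ and $R^2(X,Y)$ as base cases and iterating this recursion, an induction on $k$ yields $R^{2k-1}(X,Y) = (-b^2c^2)^{k-1}R(X,Y)$ and $R^{2k}(X,Y) = (-b^2c^2)^{k-1}R^2(X,Y)$, which is exactly the claim.

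I do not anticipate any real obstacle. The only conceptual input is that $R(X,Y)$, viewed as an operator on the $2$-plane $\mathrm{span}\{X,Y\}$, is traceless with determinant $b^2c^2$, after which Cayley--Hamilton (or a short direct computation) does everything; the remainder is the routine induction. The two points that deserve a word of care are the degenerate case $X \parallel Y$, handled above, and the fact that $R(X,Y)$ annihilates $V^{\perp}$ — this is what makes "$R^2(X,Y)$ is $-b^2c^2$ on $V$" enough to pin down $R^{p+2}(X,Y)$ on all of $TM$, once one knows that $R^{p}(X,Y)Z \in V$ for every $p \ge 1$.
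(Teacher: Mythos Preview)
The paper does not supply its own proof of this lemma; it is quoted verbatim from the reference \cite{Y.S} (Yampolsky--Saharova) and then used in the proof of Theorem~\ref{th_6}. There is therefore nothing in the present paper to compare your argument against.

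That said, your proof is correct and self-contained. The key observation --- that for constant curvature $c$ the operator $R(X,Y)$ maps $T_xM$ into the $2$-plane $V=\mathrm{span}\{X,Y\}$, restricts there to a traceless endomorphism of determinant $b^2c^2$, and hence satisfies $R^2(X,Y)\big|_V=-b^2c^2\,\mathrm{Id}_V$ by Cayley--Hamilton --- cleanly yields the two-step recursion $R^{p+2}(X,Y)=-b^2c^2\,R^{p}(X,Y)$ on all of $T_xM$ (since $R^{p}(X,Y)Z\in V$ for $p\ge 1$), and the stated formulas follow by induction. Your treatment of the degenerate case $X\parallel Y$ and your remark on the sign convention are both to the point. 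This is essentially the natural elementary proof, and presumably close in spirit to what appears in \cite{Y.S}.
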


\begin{theorem}\label{th_6}
Let $(M^{2m}, \varphi, g)$ be a para-K\"{a}hler-Norden manifold of constant curvature $c\neq0$, $T^{\varphi}_{1}M$ its $\varphi$-unit tangent bundle equipped with the $\varphi$-Sasaki metric,  $\Gamma$ be a non-vertical geodesic on $T^{\varphi}_{1}M$ and $k_{1}, \ldots, k_{2m-1}$ the Frenet curvatures of the projected curve $\gamma=\pi\circ \Gamma$. If $k_{1}\neq0$ and $k_{2}\neq0$ then $k_{i}=0$ for $i\geq3$.
\end{theorem}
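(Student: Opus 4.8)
The plan is to carry the differentiation process of Theorem~\ref{th_3} one step beyond \eqref{eq_13} and then to compare the resulting expression for $\gamma_{t}^{(4)}$ coming from the Frenet formulas with the one coming from the curvature recursion \eqref{eq_7} together with Lemma~\ref{lem_2}.

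First, a manifold of constant curvature $c$ is locally symmetric, so Theorem~\ref{th_3} applies: all Frenet curvatures $k_{i}$ of $\gamma=\pi\circ\Gamma$ are constant, $\tfrac{ds}{dt}=\sqrt{1-\rho^{2}}=$ const, and \eqref{eq_11}--\eqref{eq_13} hold. The hypotheses also force $0<\rho<1$: if $\rho=0$ then $\xi_{t}^{\prime}=0$ by \eqref{eq_4}, hence $\gamma_{t}^{\prime\prime}=R(\xi_{t}^{\prime},\varphi\xi)\gamma_{t}^{\prime}=0$ by \eqref{eq_5} and $k_{1}=0$, contradicting $k_{1}\neq0$; and $\rho=1$ is the vertical case, excluded since $\Gamma$ is non-vertical. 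Differentiating \eqref{eq_13} once more, using that the $k_{i}$ are constant, that $\tfrac{ds}{dt}=\sqrt{1-\rho^{2}}$, and the Frenet relations \eqref{eq_10}, I would obtain
\begin{equation*}
\gamma_{t}^{(4)}=(1-\rho^{2})^{2}\big(-(k_{1}^{3}+k_{1}k_{2}^{2})\,\nu_{2}+k_{1}k_{2}k_{3}\,\nu_{4}\big),
\end{equation*}
so that the component of $\gamma_{t}^{(4)}$ along $\nu_{4}$ equals $(1-\rho^{2})^{2}k_{1}k_{2}k_{3}$.

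On the other hand, \eqref{eq_7} with $p=3$ gives $\gamma_{t}^{(4)}=R^{3}(\xi_{t}^{\prime},\varphi\xi)\gamma_{t}^{\prime}$. Applying Lemma~\ref{lem_2} with $X=\xi_{t}^{\prime}$, $Y=\varphi\xi$ and $k=2$ — where $b^{2}=|\xi_{t}^{\prime}|^{2}|\varphi\xi|^{2}-g(\xi_{t}^{\prime},\varphi\xi)^{2}=|\xi_{t}^{\prime}|^{2}|\xi|^{2}$ by \eqref{eq_3} and the purity of $g$ — one gets $R^{3}(\xi_{t}^{\prime},\varphi\xi)=-b^{2}c^{2}R(\xi_{t}^{\prime},\varphi\xi)$, hence $\gamma_{t}^{(4)}=-b^{2}c^{2}\,\gamma_{t}^{\prime\prime}$. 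By \eqref{eq_12} this is a scalar multiple of $\nu_{2}$, so its $\nu_{4}$-component vanishes. Comparing with the previous paragraph, $(1-\rho^{2})^{2}k_{1}k_{2}k_{3}=0$; since $0<\rho<1$, $k_{1}\neq0$ and $k_{2}\neq0$, this forces $k_{3}=0$, and consequently (the curvatures being constant, so that $\gamma$ lies in a three-dimensional totally geodesic submanifold) $k_{i}=0$ for all $i\geq3$.

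The only delicate point I anticipate is bookkeeping: performing the extra differentiation of \eqref{eq_13} carefully so that the coefficient of $\nu_{4}$ comes out exactly as $k_{1}k_{2}k_{3}$ without stray factors, and checking that Lemma~\ref{lem_2} is applied with the correct scalar $b^{2}$. Everything else is a direct consequence of Theorem~\ref{th_3}, formula \eqref{eq_7} and Lemma~\ref{lem_2}.
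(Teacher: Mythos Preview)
Your proposal is correct and follows essentially the same route as the paper: differentiate \eqref{eq_13} once more via the Frenet formulas to obtain the $\nu_{2}$--$\nu_{4}$ expansion of $\gamma_{t}^{(4)}$, then use \eqref{eq_7} together with Lemma~\ref{lem_2} to see that $\gamma_{t}^{(4)}=-b^{2}c^{2}\gamma_{t}^{\prime\prime}$ is a multiple of $\nu_{2}$, forcing $k_{3}=0$. Your added justification that $0<\rho<1$ and your remark that $k_{3}\equiv0$ already forces all higher Frenet curvatures to vanish are welcome clarifications; the paper simply says ``by continuing the process'' at that point and also records the by-product $b^{2}c^{2}=(1-\rho^{2})(k_{1}^{2}+k_{2}^{2})$ from the $\nu_{2}$-component, which you omit but do not need.
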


\begin{proof} 
From the Theorem \ref{th_3}, all Frenet curvatures  of the projected curve $\gamma=\pi\circ \Gamma$ are constants, and using \eqref{eq_13} we have 
\begin{eqnarray}\label{eq_16}
\gamma_{t}^{(4)}=-(1-\rho^{2})^{2}k_{1}(k_{1}^{2}+k_{2}^{2})\nu_{2}+(1-\rho^{2})^{2}k_{1}k_{2}k_{3}\nu_{4}.
\end{eqnarray}
On the other hand, from \eqref{eq_7}, Lemma \ref{lem_2} and \eqref{eq_12}, we have
\begin{equation}\label{eq_17}
\gamma_{t}^{(4)}=R^{3}(\xi_{t}^{\prime}, \varphi \xi)\gamma_{t}^{\prime}=-b^{2}c^{2}R(\xi_{t}^{\prime}, \varphi \xi)\gamma_{t}^{\prime}=-b^{2}c^{2}\gamma_{t}^{\prime\prime}=-b^{2}c^{2}(1-\rho^{2})k_{1}\nu_{2},
\end{equation}
If $k_{1}\neq0$ and $k_{2}\neq0$, we get
\begin{eqnarray}\label{eq_18}
(b^{2}c^{2}-(1-\rho^{2})(k_{1}^{2}+k_{2}^{2}))\nu_{2}+(1-\rho^{2})k_{2}k_{3}\nu_{4}=0.
\end{eqnarray}
Therefore, we have $k_{3} = 0$, and $b^{2}c^{2}=(1-\rho^{2})(k_{1}^{2}+k_{2}^{2})$.i.e. $b^{2}=$ const.\\
By continuing the process, we obtain $k_{i}=0$ for $i\geq3$.
\end{proof}

\section{$F$-geodesics on tangent bundle with the $\varphi$-Sasaki metric}
Let $(M^{m}, g)$ be an Riemannian manifold. A magnetic field is a closed $2$-form $\Omega$ on $(M^{m}, g)$ and the Lorentz force of a magnetic field $\Omega$ on $(M^{m},g)$ is a $(1, 1)$-tensor field $\Phi$ given by  
\begin{eqnarray}\label{eq_19}
g(\Phi(X),Y)=\Omega(X,Y),
\end{eqnarray} 
for any vector fields $X$ and $Y$ on $M$. The magnetic trajectories of $\Omega$ (or simply a magnetic curve) with strength $q\in \mathbb{R}$ are the curves $\gamma$ on $M$ that satisfy the Lorentz equation
\begin{eqnarray}\label{eq_20}
\gamma_{t}^{\prime\prime}=q\Phi \gamma_{t}^{\prime},
\end{eqnarray} 
where $\nabla$ is the Levi-Civita connection of $g$. The Lorentz equation generalizes the equation satisfied by the geodesics of $(M^{m}, g)$, namely $\gamma_{t}^{\prime\prime}= 0$.

Let $F$ be a $(1,1)$-tensor field on $(M^{m}, g)$. A curve $\gamma$ on $M$ is called $F$-planar if its speed remains, under parallel translation along the curve $\gamma$, in the distribution generated by the vector $\gamma_{t}^{\prime}$ and $F \gamma_{t}^{\prime}$ along $\gamma$. This is equivalent to the fact that the tangent vector $\gamma_{t}^{\prime}$ satisfies
\begin{eqnarray}\label{eq_21}
\gamma_{t}^{\prime\prime}=\varrho_{1}(t)\gamma_{t}^{\prime} +\varrho_{2}F \gamma_{t}^{\prime},
\end{eqnarray} 
where $\varrho_{1}$ and $\varrho_{2}$ are some functions of the parameter $t$, see \cite{M.S,H.M}. The $F$-planar curves generalize the magnetic curves and therefore, the geodesics. More precisely, when $F=\Phi$ is a Lorentz force on the manifold, $\varrho_{1}=0$ and $\varrho_{2}$ is a constant, we obtain the magnetic trajectories corresponding to $\Phi$ with strength $\varrho_{2}$. In the absence of $F$, one gets the geodesics, see \cite{Nis}.\\
We say that a curve $\gamma$ on $M$ is an $F$-geodesic if $\gamma$ satisfies:
\begin{eqnarray}\label{eq_22}
\gamma_{t}^{\prime\prime}=F \gamma_{t}^{\prime},
\end{eqnarray} 
One can see that an $F$-geodesic is an $F$-planar curve, but in general an $F$-planar curve is not always an $F$-geodesic, see \cite{B.D}. According to \eqref{eq_22}, the Lorentz equation \eqref{eq_20} expresses the relation satisfied by an $F$-geodesic on $M$, where  $F = q\Phi$,  see \cite{K.Y,Mik}.

\subsection{$F$-geodesics on tangent bundle with the $\varphi$-Sasaki metric}\,\\
Let $\widetilde{\nabla}$ be the Levi-Civita connection of $\varphi$-Sasaki metric on tangent bundle $TM$, given in the Theorem \ref{th_0}.	

\begin{theorem}\label{th_8}
Let $(M^{2m}, \varphi, g)$ be a para-K\"{a}hler-Norden manifold, $TM$ its tangent bundle equipped with the $\varphi$-Sasaki metric $g^{\varphi}$ and $F$ be a $(1,1)$-tensor field on $M$. A curve $\Gamma =(\gamma(t), \xi(t))$ on $TM$ is an ${}^{H}\!F$-planar with respect to $\widetilde{\nabla}$ if and only if the 
\begin{equation*}
\left\{\begin{array}{lll}
\gamma_{t}^{\prime\prime}=R(\xi_{t}^{\prime},\varphi\xi)\gamma_{t}^{\prime}+\varrho_{1}\gamma_{t}^{\prime}+\varrho_{2} F\gamma_{t}^{\prime}\\
\xi_{t}^{\prime\prime}=\varrho_{1}\xi_{t}^{\prime}+\varrho_{2} F\xi_{t}^{\prime}
\end{array}\right.
\end{equation*}
\end{theorem}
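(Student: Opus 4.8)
The plan is to reduce the ${}^{H}\!F$-planar condition for $\Gamma$ on $(TM,g^{\varphi})$ to a pair of equations on the base manifold by expanding $\widetilde{\nabla}_{\Gamma_{t}^{\prime}}\Gamma_{t}^{\prime}$ via Theorem \ref{th_0}, exactly as in the proof of Theorem \ref{th_2}, and then comparing horizontal and vertical components. Recall from \eqref{eq_1} that $\Gamma_{t}^{\prime}={}^{H}\!\gamma_{t}^{\prime}+{}^{V}\!\xi_{t}^{\prime}$, so that ${}^{H}\!F\Gamma_{t}^{\prime}={}^{H}\!(F\gamma_{t}^{\prime})+{}^{V}\!(F\xi_{t}^{\prime})$ by linearity of the lifts. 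The curve $\Gamma$ is ${}^{H}\!F$-planar precisely when $\widetilde{\nabla}_{\Gamma_{t}^{\prime}}\Gamma_{t}^{\prime}=\varrho_{1}\Gamma_{t}^{\prime}+\varrho_{2}\,{}^{H}\!F\Gamma_{t}^{\prime}$ for some functions $\varrho_{1},\varrho_{2}$ of $t$.

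First I would compute $\widetilde{\nabla}_{\Gamma_{t}^{\prime}}\Gamma_{t}^{\prime}$. Using bilinearity, the derivative splits into the four terms $\widetilde{\nabla}_{{}^{H}\!\gamma_{t}^{\prime}}{}^{H}\!\gamma_{t}^{\prime}$, $\widetilde{\nabla}_{{}^{H}\!\gamma_{t}^{\prime}}{}^{V}\!\xi_{t}^{\prime}$, $\widetilde{\nabla}_{{}^{V}\!\xi_{t}^{\prime}}{}^{H}\!\gamma_{t}^{\prime}$ and $\widetilde{\nabla}_{{}^{V}\!\xi_{t}^{\prime}}{}^{V}\!\xi_{t}^{\prime}$. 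Applying parts (1)--(4) of Theorem \ref{th_0} (with the curvature terms $R(\varphi\xi,\xi_{t}^{\prime})\gamma_{t}^{\prime}$ from part (2) and $R(\varphi\xi,\xi_{t}^{\prime})\gamma_{t}^{\prime}$ from part (3) combining, and using antisymmetry of $R$ so the two half-curvature terms add up after noting the sign conventions, the vertical $R$-term from part (1) being $-\tfrac12{}^{V}\!(R(\gamma_{t}^{\prime},\gamma_{t}^{\prime})\xi)=0$), one obtains, as in the proof of Theorem \ref{th_2}, that $\widetilde{\nabla}_{\Gamma_{t}^{\prime}}\Gamma_{t}^{\prime}={}^{H}\!\bigl(\gamma_{t}^{\prime\prime}+R(\varphi\xi,\xi_{t}^{\prime})\gamma_{t}^{\prime}\bigr)+{}^{V}\!\xi_{t}^{\prime\prime}$. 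Here I should double-check the precise curvature term: in Theorem \ref{th_2} the resulting condition is $\gamma_{t}^{\prime\prime}=R(\xi_{t}^{\prime},\varphi\xi)\gamma_{t}^{\prime}$, i.e.\ the term appearing is $-R(\varphi\xi,\xi_{t}^{\prime})\gamma_{t}^{\prime}=R(\xi_{t}^{\prime},\varphi\xi)\gamma_{t}^{\prime}$; matching the stated conclusion this is consistent, since moving $R(\varphi\xi,\xi_{t}^{\prime})\gamma_{t}^{\prime}$ to the other side and using antisymmetry gives the claimed form.

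Then I would equate $\widetilde{\nabla}_{\Gamma_{t}^{\prime}}\Gamma_{t}^{\prime}=\varrho_{1}\,({}^{H}\!\gamma_{t}^{\prime}+{}^{V}\!\xi_{t}^{\prime})+\varrho_{2}\,({}^{H}\!(F\gamma_{t}^{\prime})+{}^{V}\!(F\xi_{t}^{\prime}))$ and use the fact that the horizontal and vertical lifts span complementary subspaces of $T_{(x,\xi)}TM$, so the two sides agree if and only if the horizontal and vertical components separately agree. The vertical component yields $\xi_{t}^{\prime\prime}=\varrho_{1}\xi_{t}^{\prime}+\varrho_{2}F\xi_{t}^{\prime}$, and the horizontal component yields $\gamma_{t}^{\prime\prime}+R(\varphi\xi,\xi_{t}^{\prime})\gamma_{t}^{\prime}=\varrho_{1}\gamma_{t}^{\prime}+\varrho_{2}F\gamma_{t}^{\prime}$, i.e.\ $\gamma_{t}^{\prime\prime}=R(\xi_{t}^{\prime},\varphi\xi)\gamma_{t}^{\prime}+\varrho_{1}\gamma_{t}^{\prime}+\varrho_{2}F\gamma_{t}^{\prime}$, which is exactly the stated system. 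The main obstacle, and really the only place requiring care, is bookkeeping the curvature terms and signs in the expansion of $\widetilde{\nabla}_{\Gamma_{t}^{\prime}}\Gamma_{t}^{\prime}$ — in particular verifying that the ${}^{V}$-part of part (1) vanishes (it involves $R(\gamma_{t}^{\prime},\gamma_{t}^{\prime})\xi=0$) and that the two ${}^{H}$-valued half-curvature terms from parts (2) and (3) combine correctly into a single $R(\varphi\xi,\xi_{t}^{\prime})\gamma_{t}^{\prime}$ term; everything else is a direct consequence of the splitting of $TTM$ and linearity of the lifts. No locally symmetric or constant-curvature hypothesis is needed here, so the proof is short once the computation from Theorem \ref{th_2} is reused.
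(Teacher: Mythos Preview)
Your proposal is correct and follows essentially the same approach as the paper: compute $\widetilde{\nabla}_{\Gamma_{t}^{\prime}}\Gamma_{t}^{\prime}$ from Theorem~\ref{th_0} and \eqref{eq_1} to obtain ${}^{H}\!\bigl(\gamma_{t}^{\prime\prime}+R(\varphi\xi,\xi_{t}^{\prime})\gamma_{t}^{\prime}\bigr)+{}^{V}\!\xi_{t}^{\prime\prime}$, expand $\varrho_{1}\Gamma_{t}^{\prime}+\varrho_{2}\,{}^{H}\!F\,\Gamma_{t}^{\prime}$ into its horizontal and vertical parts, and compare components. The only minor remark is that the identity ${}^{H}\!F({}^{H}\!\gamma_{t}^{\prime}+{}^{V}\!\xi_{t}^{\prime})={}^{H}\!(F\gamma_{t}^{\prime})+{}^{V}\!(F\xi_{t}^{\prime})$ is not ``linearity of the lifts'' but the definition of the horizontal lift of a $(1,1)$-tensor; the paper uses it the same way without further comment.
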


\begin{proof}
Using Theorem \ref{th_0} and \eqref{eq_1}, we find	
\begin{eqnarray}\label{eq_23}
\widetilde{\nabla}_{\Gamma_{t}^{\prime}}\Gamma_{t}^{\prime} &=& \widetilde{\nabla}_{\displaystyle({}^{H}\!\gamma_{t}^{\prime} + {}^{V}\!\xi_{t}^{\prime})}({}^{H}\!\gamma_{t}^{\prime} + {}^{V}\!\xi_{t}^{\prime}) \nonumber\\
&=&\widetilde{\nabla}_{\displaystyle{}^{H}\gamma_{t}^{\prime}}{}^{H}\gamma_{t}^{\prime} +\widetilde{\nabla}_{\displaystyle{}^{H}\!\gamma_{t}^{\prime}}{}^{V}\!\xi_{t}^{\prime}+\widetilde{\nabla}_{{}^{V}\!\xi_{t}^{\prime}}{}^{H}\!\gamma_{t}^{\prime}+\widetilde{\nabla}_{{}^{V}\!\xi_{t}^{\prime}}{}^{V}\!\xi_{t}^{\prime} \nonumber\\
&=&{}^{H}\!(\gamma_{t}^{\prime\prime}+R(\varphi \xi,\xi_{t}^{\prime})\gamma_{t}^{\prime})+{}^{V}\!\xi_{t}^{\prime\prime}
\end{eqnarray} 
On the other hand,
\begin{eqnarray}\label{eq_24}
\widetilde{\nabla}_{\Gamma_{t}^{\prime}}\Gamma_{t}^{\prime}&=&\varrho_{1}\Gamma_{t}^{\prime} +\varrho_{2}{}^{H}\!F \Gamma_{t}^{\prime}\nonumber\\
&=&\varrho_{1}({}^{H}\!\gamma_{t}^{\prime} + {}^{V}\!\xi_{t}^{\prime}) +\varrho_{2}{}^{H}\!F({}^{H}\!\gamma_{t}^{\prime} + {}^{V}\!\xi_{t}^{\prime})\nonumber\\
&=&{}^{V}\!\varrho_{1}{}^{H}\!\gamma_{t}^{\prime} +{}^{V}\!\varrho_{2}{}^{H}\!F {}^{H}\!\gamma_{t}^{\prime}+{}^{V}\!\varrho_{1}{}^{V}\!\xi_{t}^{\prime} +{}^{V}\!\varrho_{2}{}^{H}\!F {}^{V}\!\xi_{t}^{\prime}\nonumber\\
&=&{}^{H}\!(\varrho_{1}\gamma_{t}^{\prime}+\varrho_{2}F \gamma_{t}^{\prime})+{}^{V}\!(\varrho_{1}\xi_{t}^{\prime} +\varrho_{2}F \xi_{t}^{\prime}).
\end{eqnarray} 
From \eqref{eq_23} and \eqref{eq_24}, the result immediately follows.
\end{proof}

\begin{corollary}\label{co_5}
Let $(M^{2m}, \varphi, g)$ be a para-K\"{a}hler-Norden manifold and $TM$ its tangent bundle equipped with the $\varphi$-Sasaki metric $g^{\varphi}$. A curve $\Gamma =(\gamma(t), \xi(t))$ on $TM$ is an ${}^{H}\!\varphi$-planar with respect to $\widetilde{\nabla}$ if and only if the 
\begin{equation*}
\left\{\begin{array}{lll}
\gamma_{t}^{\prime\prime}=R(\xi_{t}^{\prime},\varphi\xi)\gamma_{t}^{\prime}+\varrho_{1}\gamma_{t}^{\prime}+\varrho_{2} \varphi\gamma_{t}^{\prime}\\
\xi_{t}^{\prime\prime}=\varrho_{1}\xi_{t}^{\prime}+\varrho_{2} \varphi\xi_{t}^{\prime}
\end{array}\right.
\end{equation*}
\end{corollary}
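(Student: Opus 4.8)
The plan is to specialize Theorem \ref{th_8} to the case $F=\varphi$. Since $\varphi$ is itself a $(1,1)$-tensor field on $M$, the hypotheses of Theorem \ref{th_8} apply verbatim with $F$ replaced by $\varphi$, and the notion of an ${}^{H}\!\varphi$-planar curve with respect to $\widetilde{\nabla}$ is exactly the notion of an ${}^{H}\!F$-planar curve for this choice of $F$. So the corollary is an immediate consequence: substitute $F=\varphi$ into the system
\begin{equation*}
\left\{\begin{array}{lll}
\gamma_{t}^{\prime\prime}=R(\xi_{t}^{\prime},\varphi\xi)\gamma_{t}^{\prime}+\varrho_{1}\gamma_{t}^{\prime}+\varrho_{2} F\gamma_{t}^{\prime}\\
\xi_{t}^{\prime\prime}=\varrho_{1}\xi_{t}^{\prime}+\varrho_{2} F\xi_{t}^{\prime}
\end{array}\right.
\end{equation*}
provided by Theorem \ref{th_8}, and replace $F\gamma_{t}^{\prime}$ by $\varphi\gamma_{t}^{\prime}$ and $F\xi_{t}^{\prime}$ by $\varphi\xi_{t}^{\prime}$.

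Concretely, I would first recall that ${}^{H}\!\varphi$ denotes the horizontal lift of the tensor $\varphi$ to $TM$, so that for the tangent vector $\Gamma_{t}^{\prime}={}^{H}\!\gamma_{t}^{\prime}+{}^{V}\!\xi_{t}^{\prime}$ given in \eqref{eq_1} we have ${}^{H}\!\varphi\,\Gamma_{t}^{\prime}={}^{H}\!(\varphi\gamma_{t}^{\prime})+{}^{V}\!(\varphi\xi_{t}^{\prime})$; this is exactly the computation carried out in \eqref{eq_24} with $F$ specialized to $\varphi$. Then the defining equation $\widetilde{\nabla}_{\Gamma_{t}^{\prime}}\Gamma_{t}^{\prime}=\varrho_{1}\Gamma_{t}^{\prime}+\varrho_{2}\,{}^{H}\!\varphi\,\Gamma_{t}^{\prime}$ of an ${}^{H}\!\varphi$-planar curve, combined with the expression \eqref{eq_23} for $\widetilde{\nabla}_{\Gamma_{t}^{\prime}}\Gamma_{t}^{\prime}$ in terms of the horizontal and vertical components, yields the stated system upon comparing horizontal and vertical parts.

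There is essentially no obstacle here: the corollary is a direct specialization and the only thing to verify is that the horizontal lift of $\varphi$ acts on the decomposition of $\Gamma_{t}^{\prime}$ componentwise, which is already implicit in the proof of Theorem \ref{th_8}. If one wants the proof to be self-contained rather than a one-line reference, the work amounts to reproducing the chain of equalities in \eqref{eq_24} with $F=\varphi$, then setting \eqref{eq_23} equal to it and reading off the two equations. I would therefore write the proof as: "Apply Theorem \ref{th_8} with $F=\varphi$," optionally followed by the brief componentwise verification just described.
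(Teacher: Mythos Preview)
Your proposal is correct and matches the paper's approach exactly: the corollary is stated immediately after Theorem~\ref{th_8} with no separate proof, so it is meant to be read as the direct specialization $F=\varphi$, which is precisely what you do.
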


In the particular case when $\varrho_{1} = 0$ and $\varrho_{2} = 1$ in the Theorem \ref{th_8}, we obtain the following result.

\begin{theorem}\label{th_9}
Let $(M^{2m}, \varphi, g)$ be a para-K\"{a}hler-Norden manifold, $TM$ its tangent bundle equipped with the $\varphi$-Sasaki metric $g^{\varphi}$ and $F$ be a $(1,1)$-tensor field on $M$. A curve $\Gamma =(\gamma(t), \xi(t))$ on $TM$ is an ${}^{H}\!F$-geodesic with respect to $\widetilde{\nabla}$ if and only if the 
\begin{equation*}
\left\{\begin{array}{lll}
\gamma_{t}^{\prime\prime}=R(\xi_{t}^{\prime},\varphi\xi)\gamma_{t}^{\prime}+ F\gamma_{t}^{\prime}\\
\xi_{t}^{\prime\prime}=F\xi_{t}^{\prime}
\end{array}\right.
\end{equation*}
\end{theorem}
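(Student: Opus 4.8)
The plan is to obtain Theorem~\ref{th_9} as the special case $\varrho_1 = 0$, $\varrho_2 = 1$ of Theorem~\ref{th_8}; equivalently, I would rerun the short computation from the proof of that theorem. By \eqref{eq_22} transplanted to the tensor field ${}^{H}\!F$ on $TM$, the curve $\Gamma = (\gamma(t),\xi(t))$ is an ${}^{H}\!F$-geodesic with respect to $\widetilde{\nabla}$ exactly when
\begin{equation*}
\widetilde{\nabla}_{\Gamma_t'}\Gamma_t' = {}^{H}\!F(\Gamma_t'),
\end{equation*}
so the whole argument reduces to expanding both sides in the adapted frame and comparing horizontal and vertical components.

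For the left-hand side I would invoke the computation already recorded in \eqref{eq_23}: writing $\Gamma_t' = {}^{H}\!\gamma_t' + {}^{V}\!\xi_t'$ as in \eqref{eq_1} and applying the four connection formulas of Theorem~\ref{th_0} term by term gives
\begin{equation*}
\widetilde{\nabla}_{\Gamma_t'}\Gamma_t' = {}^{H}\!\big(\gamma_t'' + R(\varphi\xi,\xi_t')\gamma_t'\big) + {}^{V}\!\xi_t''.
\end{equation*}
For the right-hand side I would use that the horizontal lift of a $(1,1)$-tensor field acts on the adapted frame by ${}^{H}\!F({}^{H}\!X) = {}^{H}\!(FX)$ and ${}^{H}\!F({}^{V}\!X) = {}^{V}\!(FX)$, whence ${}^{H}\!F(\Gamma_t') = {}^{H}\!(F\gamma_t') + {}^{V}\!(F\xi_t')$.

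Equating the two expressions and separating the horizontal and vertical parts yields $\gamma_t'' + R(\varphi\xi,\xi_t')\gamma_t' = F\gamma_t'$ together with $\xi_t'' = F\xi_t'$; rewriting the curvature term via the antisymmetry $R(\varphi\xi,\xi_t') = -R(\xi_t',\varphi\xi)$ produces the stated system, and the converse is read off the same identity. I do not expect any real obstacle here: the only points requiring a little care are the correct action of the lifted tensor ${}^{H}\!F$ on vertical vectors and the sign bookkeeping in the curvature term, exactly as in the $\varrho_1 = 0$, $\varrho_2 = 1$ instance of Theorem~\ref{th_8}.
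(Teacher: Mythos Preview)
Your proposal is correct and matches the paper's own approach exactly: the paper simply states that Theorem~\ref{th_9} is the particular case $\varrho_1 = 0$, $\varrho_2 = 1$ of Theorem~\ref{th_8}, which is precisely what you do, and your unpacking of the computation via \eqref{eq_23} and the action of ${}^{H}\!F$ on horizontal and vertical lifts is the content of \eqref{eq_23}--\eqref{eq_24} specialized to those values.
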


\begin{corollary}\label{co_6}
Let $(M^{2m}, \varphi, g)$ be a para-K\"{a}hler-Norden manifold and $TM$ its tangent bundle equipped with the $\varphi$-Sasaki metric $g^{\varphi}$. A curve $\Gamma =(\gamma(t), \xi(t))$ on $TM$ is an ${}^{H}\!\varphi$-geodesic with respect to $\widetilde{\nabla}$ if and only if the 
\begin{equation*}
\left\{\begin{array}{lll}
\gamma_{t}^{\prime\prime}=R(\xi_{t}^{\prime},\varphi\xi)\gamma_{t}^{\prime}+ \varphi\gamma_{t}^{\prime}\\
\xi_{t}^{\prime\prime}=\varphi\xi_{t}^{\prime}
\end{array}\right.
\end{equation*}
\end{corollary}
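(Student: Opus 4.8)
The plan is to obtain this Corollary as the direct specialization of Theorem \ref{th_9} to the choice $F=\varphi$. The first thing I would verify is that $\varphi$ legitimately plays the role of $F$: by hypothesis $(M^{2m},\varphi,g)$ is a para-K\"{a}hler-Norden manifold, so $\varphi$ is an almost para-complex (almost product) structure, i.e. a $(1,1)$-tensor field on $M$. Thus $\varphi$ satisfies exactly the hypothesis imposed on $F$ in Theorem \ref{th_9}, and the notion of an ${}^{H}\!\varphi$-geodesic is precisely the notion of an ${}^{H}\!F$-geodesic from that theorem with $F$ replaced by $\varphi$.

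Given this identification, I would simply invoke Theorem \ref{th_9}: a curve $\Gamma=(\gamma(t),\xi(t))$ on $TM$ is an ${}^{H}\!F$-geodesic with respect to $\widetilde{\nabla}$ if and only if $\gamma_{t}^{\prime\prime}=R(\xi_{t}^{\prime},\varphi\xi)\gamma_{t}^{\prime}+F\gamma_{t}^{\prime}$ and $\xi_{t}^{\prime\prime}=F\xi_{t}^{\prime}$. Setting $F=\varphi$ in both equations yields $\gamma_{t}^{\prime\prime}=R(\xi_{t}^{\prime},\varphi\xi)\gamma_{t}^{\prime}+\varphi\gamma_{t}^{\prime}$ and $\xi_{t}^{\prime\prime}=\varphi\xi_{t}^{\prime}$, which is exactly the claimed system.

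Since the result is a pure substitution into an already-established biconditional, there is no genuine obstacle; the only point deserving a line of care is the observation in the first paragraph that $\varphi$ is an admissible tensor for $F$. Alternatively, one could reprove the statement directly by recomputing $\widetilde{\nabla}_{\Gamma_{t}^{\prime}}\Gamma_{t}^{\prime}$ through Theorem \ref{th_0} exactly as in \eqref{eq_23}, and then setting it equal to ${}^{H}\!\varphi\,\Gamma_{t}^{\prime}={}^{H}\!(\varphi\gamma_{t}^{\prime})+{}^{V}\!(\varphi\xi_{t}^{\prime})$ and comparing the horizontal and vertical components; but this merely repeats the argument of Theorem \ref{th_9} and is therefore unnecessary.
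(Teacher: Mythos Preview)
Your proposal is correct and matches the paper's approach exactly: the paper states Corollary~\ref{co_6} immediately after Theorem~\ref{th_9} with no separate proof, so it is understood as the direct specialization $F=\varphi$, which is precisely what you do.
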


\begin{theorem}\label{th_10}
Let $(M^{2m}, \varphi, g)$ be a para-K\"{a}hler-Norden manifold, $TM$ its tangent bundle equipped with the $\varphi$-Sasaki metric $g^{\varphi}$ and $F$ be a $(1,1)$-tensor field on $M$. If $\Gamma =(\gamma(t), \xi(t))$ is a horizontal lift of a curve $\gamma$, then $\Gamma$ is an ${}^{H}\!F$-planar curve (resp., ${}^{H}\!F$-geodesic) if and only if $\gamma$ is an $F$-planar curve (resp., $F$-geodesic).
\end{theorem}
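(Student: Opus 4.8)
The plan is to read off the result directly from Theorems \ref{th_8} and \ref{th_9}, exploiting the defining property of a horizontal lift. Recall that $\Gamma=(\gamma(t),\xi(t))$ being a horizontal lift of $\gamma$ means, by definition, that $\xi_t^{\prime}=0$ along $\gamma$. The first step is to observe that this forces $\xi_t^{\prime\prime}=0$ as well: indeed $\xi_t^{\prime\prime}=\nabla_{\gamma_t^{\prime}}\xi_t^{\prime}=\nabla_{\gamma_t^{\prime}}0=0$. The second, equally elementary, observation is that since $R$ is tensorial (in particular linear) in its first argument, $R(\xi_t^{\prime},\varphi\xi)\gamma_t^{\prime}=R(0,\varphi\xi)\gamma_t^{\prime}=0$.

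With these two facts in hand, substitute $\xi_t^{\prime}=0$ and $\xi_t^{\prime\prime}=0$ into the system of Theorem \ref{th_8} characterizing an ${}^{H}\!F$-planar curve. The second equation $\xi_t^{\prime\prime}=\varrho_1\xi_t^{\prime}+\varrho_2 F\xi_t^{\prime}$ becomes $0=0$, hence is automatically satisfied; the first equation collapses to $\gamma_t^{\prime\prime}=\varrho_1\gamma_t^{\prime}+\varrho_2 F\gamma_t^{\prime}$, which is exactly \eqref{eq_21}, the condition for $\gamma$ to be an $F$-planar curve on $(M,\nabla)$. Conversely, if $\gamma$ satisfies \eqref{eq_21} and we set $\xi_t^{\prime}=0$ (so $\xi_t^{\prime\prime}=0$), then both equations of the system of Theorem \ref{th_8} hold with the same functions $\varrho_1,\varrho_2$, so $\Gamma$ is an ${}^{H}\!F$-planar curve. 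This gives the ``$F$-planar'' half of the equivalence.

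For the ``$F$-geodesic'' half, repeat the same substitution in the system of Theorem \ref{th_9} (equivalently, take $\varrho_1=0$, $\varrho_2=1$): under $\xi_t^{\prime}=\xi_t^{\prime\prime}=0$ the system $\{\gamma_t^{\prime\prime}=R(\xi_t^{\prime},\varphi\xi)\gamma_t^{\prime}+F\gamma_t^{\prime},\ \xi_t^{\prime\prime}=F\xi_t^{\prime}\}$ reduces to the single equation $\gamma_t^{\prime\prime}=F\gamma_t^{\prime}$, which is precisely \eqref{eq_22}, the $F$-geodesic equation for $\gamma$; the converse is obtained as before by setting $\xi_t^{\prime}=0$.

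There is no real obstacle here. The only points that need to be stated carefully are that the vanishing of $\xi_t^{\prime}$ propagates to $\xi_t^{\prime\prime}$ (so that the vertical component of the lifted equation is trivially satisfied) and that the curvature term $R(\xi_t^{\prime},\varphi\xi)\gamma_t^{\prime}$ drops out because $R$ is linear in its first slot; once these are noted, the claimed equivalence is immediate from Theorems \ref{th_8} and \ref{th_9}.
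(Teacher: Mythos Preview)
Your proof is correct and follows essentially the same idea as the paper: both exploit $\xi_t^{\prime}=0$ to collapse the lifted equation to the base equation $\gamma_t^{\prime\prime}=\varrho_1\gamma_t^{\prime}+\varrho_2 F\gamma_t^{\prime}$. The only organizational difference is that you invoke Theorems~\ref{th_8} and~\ref{th_9} directly, whereas the paper recomputes $\widetilde{\nabla}_{\Gamma_t^{\prime}}\Gamma_t^{\prime}={}^{H}\!\gamma_t^{\prime\prime}$ from scratch using $\Gamma_t^{\prime}={}^{H}\!\gamma_t^{\prime}$ and the connection formulas of Theorem~\ref{th_0}; the content is the same.
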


\begin{proof} Let $\gamma$ be an $F$-planar with respect to $\nabla$ on $M$, i.e. $\gamma$ satisfies	
\begin{eqnarray*}
\gamma_{t}^{\prime\prime}=\varrho_{1}\gamma_{t}^{\prime} +\varrho_{2}F \gamma_{t}^{\prime},
\end{eqnarray*} 
where $\varrho_{1}$ and $\varrho_{2}$ are some functions of the parameter $t$. Since $\Gamma =(\gamma(t), \xi(t))$ is a horizontal lift of a curve $\gamma$ then $\xi_{t}^{\prime}=0$
and  $\Gamma_{t}^{\prime}={}^{H}\!\gamma_{t}^{\prime}$.\\	
On the other hand,
\begin{eqnarray*}
\widetilde{\nabla}_{\Gamma_{t}^{\prime}}\Gamma_{t}^{\prime}&=&{}^{H}\!\gamma_{t}^{\prime\prime}\\
&=&{}^{H}\!(\varrho_{1}\gamma_{t}^{\prime}+\varrho_{2}F \gamma_{t}^{\prime})\\
&=&{}^{V}\!\varrho_{1}{}^{H}\!\gamma_{t}^{\prime} +{}^{V}\!\varrho_{2}{}^{H}\!F {}^{H}\!\gamma_{t}^{\prime}\\
&=&{}^{V}\!\varrho_{1}\Gamma_{t}^{\prime}+{}^{V}\!\varrho_{2}{}^{H}\!F \Gamma_{t}^{\prime}.
\end{eqnarray*} 
i.e. $\Gamma$ be an ${}^{H}\!F$-planar with respect to $\widetilde{\nabla}$. In the case of $\varrho_{1} = 0$ and $\varrho_{2} = 1$, we
get that $\Gamma$ is an ${}^{H}\!F$-geodesic if and only $\gamma$ is an $F$-geodesic.
\end{proof}

\begin{corollary}\label{co_7}
Let $(M^{2m}, \varphi, g)$ be a para-K\"{a}hler-Norden manifold, $TM$ its tangent bundle equipped with the $\varphi$-Sasaki metric $g^{\varphi}$. If $\Gamma =(\gamma(t), \xi(t))$ is a horizontal lift of a curve $\gamma$, then $\Gamma$ is an ${}^{H}\!\varphi$-planar curve (resp., ${}^{H}\!\varphi$-geodesic) if and only if $\gamma$ is an $\varphi$-planar curve (resp., $\varphi$-geodesic).
\end{corollary}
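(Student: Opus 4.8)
The plan is to derive Corollary \ref{co_7} directly from Theorem \ref{th_10} by specializing the $(1,1)$-tensor field $F$ to be the almost para-complex structure $\varphi$ itself. Since $\varphi$ is a genuine $(1,1)$-tensor field on $M$, it is an admissible choice for $F$, and with this substitution the notion of ${}^{H}\!F$-planar curve (resp.\ ${}^{H}\!F$-geodesic) on $TM$ becomes exactly the notion of ${}^{H}\!\varphi$-planar curve (resp.\ ${}^{H}\!\varphi$-geodesic), while the notion of $F$-planar curve (resp.\ $F$-geodesic) on $M$ becomes the notion of $\varphi$-planar curve (resp.\ $\varphi$-geodesic). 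Hence the asserted equivalence is literally the content of Theorem \ref{th_10}.

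For completeness I would also record the short self-contained computation. Since $\Gamma=(\gamma(t),\xi(t))$ is a horizontal lift of $\gamma$, we have $\xi_{t}^{\prime}=0$ and $\Gamma_{t}^{\prime}={}^{H}\!\gamma_{t}^{\prime}$. By Theorem \ref{th_0}(1) the curvature term $R(\varphi\xi,\xi_{t}^{\prime})\gamma_{t}^{\prime}$ vanishes (because $\xi_{t}^{\prime}=0$), so $\widetilde{\nabla}_{\Gamma_{t}^{\prime}}\Gamma_{t}^{\prime}={}^{H}\!\gamma_{t}^{\prime\prime}$. On the other hand, by the definition of the horizontal lift ${}^{H}\!\varphi$ of the tensor $\varphi$ to $TM$ (acting as $\varphi$ on each horizontal and vertical component, exactly as used in the proof of Theorem \ref{th_8}), one gets ${}^{H}\!\varphi\,\Gamma_{t}^{\prime}={}^{H}\!\varphi\,{}^{H}\!\gamma_{t}^{\prime}={}^{H}\!(\varphi\gamma_{t}^{\prime})$. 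Therefore the ${}^{H}\!\varphi$-planar equation $\widetilde{\nabla}_{\Gamma_{t}^{\prime}}\Gamma_{t}^{\prime}=\varrho_{1}\Gamma_{t}^{\prime}+\varrho_{2}\,{}^{H}\!\varphi\,\Gamma_{t}^{\prime}$ becomes ${}^{H}\!\gamma_{t}^{\prime\prime}={}^{H}\!(\varrho_{1}\gamma_{t}^{\prime}+\varrho_{2}\varphi\gamma_{t}^{\prime})$, and since the horizontal lift map $X\mapsto{}^{H}\!X$ from $T_{\gamma(t)}M$ onto $H_{\Gamma(t)}TM$ is a linear isomorphism, this is equivalent to $\gamma_{t}^{\prime\prime}=\varrho_{1}\gamma_{t}^{\prime}+\varrho_{2}\varphi\gamma_{t}^{\prime}$, i.e.\ $\gamma$ is $\varphi$-planar. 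Setting $\varrho_{1}=0$ and $\varrho_{2}=1$ gives the corresponding statement for ${}^{H}\!\varphi$-geodesics and $\varphi$-geodesics.

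I do not expect any real obstacle here: the only points that need a word of justification are that $\varphi$ qualifies as the tensor $F$ in Theorem \ref{th_10} (it does, being of type $(1,1)$) and that the identification ${}^{H}\!(\cdot)$ used to pass between the equation on $TM$ and the equation on $M$ is injective (it is). Consequently the corollary is a routine specialization, and I would present it in one or two lines invoking Theorem \ref{th_10}, optionally appending the computation above.
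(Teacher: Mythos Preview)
Your proposal is correct and matches the paper's approach exactly: Corollary \ref{co_7} is stated without proof in the paper because it is an immediate specialization of Theorem \ref{th_10} obtained by taking $F=\varphi$. Your optional self-contained computation simply reproduces the proof of Theorem \ref{th_10} with this substitution, so nothing further is needed.
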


\begin{example}
Let $(\mathbb{R}^{2},\varphi,g)$ be a para-K\"{a}hler-Norden manifold such that
$$g= dx^{2}+dy^{2}, \quad \varphi=\left( \begin{array}{ccc}
1  & 0\\
0 &-1 
\end{array} \right).$$
Let $\Gamma =(\gamma(t), \xi(t))$ such that  $\gamma(t)=(x(t), y(t))$ and  $\xi(t) =(u(t),v(t))$\\
$1)$ \,$\Gamma$  is an ${}^{H}\!\varphi$-geodesic if and only if the 
\begin{eqnarray*}
\left\{\begin{array}{lll}
\gamma_{t}^{\prime\prime}= \varphi\gamma_{t}^{\prime}\\
\xi_{t}^{\prime\prime}=\varphi\xi_{t}^{\prime}
\end{array}\right.\Leftrightarrow
\left\{\begin{array}{lll}
x^{\prime\prime}= x^{\prime}\\
y^{\prime\prime}= -y^{\prime}\\
u^{\prime\prime}= u^{\prime}\\
v^{\prime\prime}= -v^{\prime}
\end{array}\right.\Leftrightarrow
\left\{\begin{array}{lll}
x(t)= k_{1}e^{t}+k_{2}\\
y(t)= k_{3}e^{-t}+k_{4}\\
u(t)= k_{5}e^{t}+k_{6}\\
v(t)= k_{7}e^{-t}+k_{8}
\end{array}\right.
\end{eqnarray*}
where $k_{i}$, $i=\overline{1,8}$  are real constants.\\
$2)$ \, $\Gamma$  is an ${}^{H}\!\varphi$-planar if and only if the 
\begin{eqnarray*}
\left\{\begin{array}{lll}
\gamma_{t}^{\prime\prime}=\varrho_{1}\gamma_{t}^{\prime}+ \varrho_{2}\varphi\gamma_{t}^{\prime}\\
\xi_{t}^{\prime\prime}=\varrho_{1}\xi_{t}^{\prime}+ \varrho_{2}\varphi\xi_{t}^{\prime}
\end{array}\right.&\Leftrightarrow&
\left\{\begin{array}{lll}
x^{\prime\prime}= (\varrho_{1}+ \varrho_{2})x^{\prime}\\
y^{\prime\prime}= (\varrho_{1}- \varrho_{2})y^{\prime}\\
u^{\prime\prime}= (\varrho_{1}+ \varrho_{2})u^{\prime}\\
v^{\prime\prime}= (\varrho_{1}- \varrho_{2})v^{\prime}
\end{array}\right.\\
&\Leftrightarrow&\left\{\begin{array}{lll}
x(t)= \varepsilon_{1}\int e^{\int (\varrho_{1}+\varrho_{2})dt}dt\\
y(t)=\varepsilon_{2}\int e^{\int (\varrho_{1}-\varrho_{2})dt}dt\\
u(t)= \varepsilon_{3}\int e^{\int (\varrho_{1}+\varrho_{2})dt}dt\\
v(t)=\varepsilon_{4}\int e^{\int (\varrho_{1}-\varrho_{2})dt}dt
\end{array}\right.
\end{eqnarray*}
where $\varepsilon_{1}=\varepsilon_{2}=\varepsilon_{3}=\varepsilon_{4}=\pm1$.\\
For example: $\varrho_{1}(t)= \dfrac{1}{t+1}$ and $\varrho_{2}(t)= \dfrac{1}{t-1}$, we find
$$\left\{\begin{array}{lll}
x(t)= a_{1}t^{3}-3a_{1}t+a_{2}\\
y(t)=a_{3}\ln (t+1)^{2}+a_{3}t+a_{4}\\
u(t)= b_{1}t^{3}-3b_{1}t+b_{2}\\
v(t)=b_{3}\ln (t+1)^{2}+b_{3}t+b_{4}
\end{array}\right.$$
where $a_{i}$, $b_{i}$, $i=\overline{1,4}$  are real constants, then \\
$\Gamma=(a_{1}t^{3}-3a_{1}t+a_{2}, a_{3}\ln (t+1)^{2}+a_{3}t+a_{4}, b_{1}t^{3}-3b_{1}t+b_{2}, b_{3}\ln (t+1)^{2}+b_{3}t+b_{4})$  is an ${}^{H}\!\varphi$-planar on $T\mathbb{R}^{2}$.
\end{example}

\begin{example}
Let $(\mathbb{R}^{2},\varphi,g)$ be a para-K\"{a}hler-Norden manifold such that
$$g= x^{2}dx^{2}+y^{2}dy^{2}, \quad \varphi=\left( \begin{array}{ccc}
0  & \dfrac{y}{x}\\
\dfrac{x}{y} &0 
\end{array} \right)\quad and \quad F=\left( \begin{array}{ccc}
a & 0\\
0 & b 
\end{array} \right),\; a,b\in \mathbb{R}.$$
The non-null Christoffel symbols of the Riemannian connection are:
$$\Gamma_{11}^{1} =\dfrac{1}{x}\;,\;\; \Gamma_{22}^{2} = \dfrac{1}{y}.$$
Let $\Gamma =(\gamma(t), \xi(t))$ be a horizontal lift of a curve $\gamma$, such that  $\gamma(t)=(x(t), y(t))$ and  $\xi(t) =(u(t),v(t))$ then $\xi_{t}^{\prime}=0$, from \eqref{eq_II} we have,
$$\dfrac{d\xi^{h}}{dt}+\sum_{i,j=1}^{2}\frac{d\gamma^{j}}{dt}\xi^{i}\Gamma_{ij}^{h}=0 \Leftrightarrow \left\{\begin{array}{lll}
u^{\prime} + \dfrac{x^{\prime}}{x}u=0\\
v^{\prime} + \dfrac{y^{\prime}}{y}v=0
\end{array} \right.\Leftrightarrow \left\{\begin{array}{lll}
u(t)=\dfrac{k_{1}}{x(t)}\\
v(t)=\dfrac{k_{2}}{y(t)}
\end{array} \right.$$
where $k_{1}, k_{2}$ are real constants.\\
$\gamma$ is an $F$-geodesic if and only if $\gamma_{t}^{\prime\prime}=F \gamma_{t}^{\prime}$,  from \eqref{eq_I} we have 
$$\left\{\begin{array}{lll}
x^{\prime\prime} + \dfrac{(x^{\prime})^{2}}{x}=ax^{\prime}\\
y^{\prime\prime} + \dfrac{(y^{\prime})^{2}}{y}=by^{\prime}
\end{array} \right.\Leftrightarrow \left\{\begin{array}{lll}
x(t)=\varepsilon_{1}\sqrt{c_{1}e^{at}+c_{2}}\\
y(t)=\varepsilon_{2}\sqrt{c_{3}e^{bt}+c_{4}}
\end{array} \right.$$
where $c_{1}, c_{2},  c_{3}, c_{4}$ are real constants and $\varepsilon_{1}=\varepsilon_{2}=\pm1$.\\
The horizontal lift $\Gamma =(\varepsilon_{1}\sqrt{c_{1}e^{at}+c_{2}}, \varepsilon_{2}\sqrt{c_{3}e^{bt}+c_{4}}, \dfrac{\varepsilon_{1}k_{1}}{\sqrt{c_{1}e^{at}+c_{2}}}, \dfrac{\varepsilon_{2}k_{2}}{\sqrt{c_{3}e^{bt}+c_{4}}})$ is an  ${}^{H}\!F$-geodesic on $T\mathbb{R}^{2}$.\\
$\gamma$ is an $F$-planar if and only if $\gamma_{t}^{\prime\prime}=\varrho_{1}\gamma_{t}^{\prime} +\varrho_{2}F \gamma_{t}^{\prime}$, where $\varrho_{1}$ and $\varrho_{2}$ are some functions of the parameter $t$, from \eqref{eq_I} we have 
$$\left\{\begin{array}{lll}
x^{\prime\prime} + \dfrac{(x^{\prime})^{2}}{x}=ax^{\prime}\\	y^{\prime\prime} + \dfrac{(y^{\prime})^{2}}{x}=by^{\prime}
\end{array} \right.\Leftrightarrow \left\{\begin{array}{lll}
x(t)=\varepsilon_{1}\sqrt{c_{1}e^{(\varrho_{1}+a\varrho_{2}t)}+c_{2}}\\
y(t)=\varepsilon_{1}\sqrt{c_{3}e^{(\varrho_{2}+b\varrho_{2}t)}+c_{4}}
\end{array} \right.$$
where $c_{1}, c_{2},  c_{3}, c_{4}$ are real constants and $\varepsilon_{1}=\varepsilon_{2}=\pm1$.\\
The horizontal lift 
$$\Gamma =(\varepsilon_{1}\sqrt{c_{1}e^{(\varrho_{1}+a\varrho_{2}t)}+c_{2}}, \varepsilon_{2}\sqrt{c_{3}e^{(\varrho_{1}+b\varrho_{2}t)}+c_{4}}, \dfrac{\varepsilon_{1}k_{1}}{\sqrt{c_{1}e^{(\varrho_{1}+a\varrho_{2}t)}+c_{2}}}, \dfrac{\varepsilon_{2}k_{2}}{\sqrt{c_{3}e^{(\varrho_{1}+b\varrho_{2}t)}+c_{4}}})$$ is an  ${}^{H}\!F$-planar on $T\mathbb{R}^{2}$.
\end{example}

\subsection{$F$-geodesics on $\varphi$-unit tangent bundle with the $\varphi$-Sasaki metric}\,\\
Let $\widehat{\nabla}$ be the Levi-Civita connection of $\varphi$-Sasaki metric on $\varphi$-unit tangent bundle $T^{\varphi}_{1}M$, given in the Theorem \ref{th_1}.
\begin{theorem}\label{th_11}
Let $(M^{2m}, \varphi, g)$ be a para-K\"{a}hler-Norden manifold and $T^{\varphi}_{1}M$ its $\varphi$-unit tangent bundle equipped with the $\varphi$-Sasaki metric and $F$ be a $(1,1)$-tensor field on $M$. A curve $\Gamma =(\gamma(t), \xi(t))$ on $T^{\varphi}_{1}M$ is an ${}^{H}\!F$-planar with respect to $\widehat{\nabla}$ if and only if the 
\begin{equation*}
\left\{\begin{array}{lll}
\gamma_{t}^{\prime\prime}=R(\xi_{t}^{\prime},\varphi\xi)\gamma_{t}^{\prime}+\varrho_{1}\gamma_{t}^{\prime}+\varrho_{2} F\gamma_{t}^{\prime}\\
\xi_{t}^{\prime\prime}=\varrho_{1}\xi_{t}^{\prime}+\varrho_{2} F\xi_{t}^{\prime}-\rho^{2}\xi
\end{array}\right.
\end{equation*}
where $\rho = const$ and $0\leq \rho \leq1$.
\end{theorem}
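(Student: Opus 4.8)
The plan is to mimic the proof of Theorem~\ref{th_8}, now carried out on $T^{\varphi}_{1}M$: we replace the connection $\widetilde{\nabla}$ of Theorem~\ref{th_0} by the connection $\widehat{\nabla}$ of Theorem~\ref{th_1}, the splitting \eqref{eq_1} by \eqref{eq_2}, and vertical lifts by tangential lifts. First I would reuse the computation already performed inside the proof of Theorem~\ref{th_2}: for a curve $\Gamma=(\gamma(t),\xi(t))$ on $T^{\varphi}_{1}M$ one has
\[
\widehat{\nabla}_{\Gamma_{t}^{\prime}}\Gamma_{t}^{\prime}={}^{H}\!\big(\gamma_{t}^{\prime\prime}+R(\varphi\xi,\xi_{t}^{\prime})\gamma_{t}^{\prime}\big)+{}^{V}\!\big(\xi_{t}^{\prime\prime}-g(\xi_{t}^{\prime\prime},\varphi\xi)\xi\big),
\]
where the second summand is exactly ${}^{T}\!\xi_{t}^{\prime\prime}$, with $\mathcal{N}={}^{V}\!\xi$ the unit normal.

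Second, I would expand the right-hand side of the ${}^{H}\!F$-planar equation $\widehat{\nabla}_{\Gamma_{t}^{\prime}}\Gamma_{t}^{\prime}=\varrho_{1}\Gamma_{t}^{\prime}+\varrho_{2}\,{}^{H}\!F\Gamma_{t}^{\prime}$, exactly as in \eqref{eq_24}. Using \eqref{eq_2}, the identity $g(\xi_{t}^{\prime},\varphi\xi)=0$ from \eqref{eq_3} (so that ${}^{T}\!\xi_{t}^{\prime}={}^{V}\!\xi_{t}^{\prime}$), and that ${}^{H}\!F$ carries ${}^{H}\!X$ to ${}^{H}\!(FX)$ and ${}^{V}\!X$ to ${}^{V}\!(FX)$, one gets
\[
\varrho_{1}\Gamma_{t}^{\prime}+\varrho_{2}\,{}^{H}\!F\Gamma_{t}^{\prime}={}^{H}\!(\varrho_{1}\gamma_{t}^{\prime}+\varrho_{2}F\gamma_{t}^{\prime})+{}^{V}\!(\varrho_{1}\xi_{t}^{\prime}+\varrho_{2}F\xi_{t}^{\prime}).
\]
Comparing horizontal parts and using $R(\varphi\xi,\xi_{t}^{\prime})=-R(\xi_{t}^{\prime},\varphi\xi)$ gives the first equation of the system; comparing vertical parts gives $\xi_{t}^{\prime\prime}-g(\xi_{t}^{\prime\prime},\varphi\xi)\xi=\varrho_{1}\xi_{t}^{\prime}+\varrho_{2}F\xi_{t}^{\prime}$. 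Differentiating \eqref{eq_3} once more (as in the derivation of \eqref{eq_6}) yields $g(\xi_{t}^{\prime\prime},\varphi\xi)=-g(\xi_{t}^{\prime},\varphi\xi_{t}^{\prime})$, so putting $\rho^{2}=g(\xi_{t}^{\prime},\varphi\xi_{t}^{\prime})$ the vertical equation becomes $\xi_{t}^{\prime\prime}=\varrho_{1}\xi_{t}^{\prime}+\varrho_{2}F\xi_{t}^{\prime}-\rho^{2}\xi$. Reading the chain of equalities backwards gives the converse.

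The step I expect to be the main obstacle is the consistent treatment of the normal direction $\mathcal{N}={}^{V}\!\xi$: one has to carry the projection correction $-g(\xi_{t}^{\prime\prime},\varphi\xi)\xi$ through so that the coefficient of $\xi$ emerges precisely as $-\rho^{2}$, and make sure both sides stay compatible with the hypersurface condition (i.e., that $\varrho_{1}\xi_{t}^{\prime}+\varrho_{2}F\xi_{t}^{\prime}$ and $\xi_{t}^{\prime\prime}$ enter the matching correctly). One should also justify, as in Theorem~\ref{th_2}, that $\rho$ is constant with $0\le\rho\le1$; here that has to be extracted from the constraint $g(\xi,\varphi\xi)=1$ together with the two equations of the system, rather than from an arc-length normalization, which is the genuinely delicate point.
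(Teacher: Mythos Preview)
Your proposal is correct and follows essentially the same route as the paper: compute $\widehat{\nabla}_{\Gamma_t'}\Gamma_t'$ via Theorem~\ref{th_1} and \eqref{eq_2} (reusing the calculation from the proof of Theorem~\ref{th_2}), expand $\varrho_1\Gamma_t'+\varrho_2\,{}^{H}\!F\,\Gamma_t'$ using \eqref{eq_3} to kill the normal correction, and compare horizontal and vertical parts. For the point you flag as delicate---the constancy of $\rho$ and the bound $0\le\rho\le1$---the paper does not argue it afresh but simply invokes \eqref{eq_6}.
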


\begin{proof}
Using Theorem \ref{th_1} and \eqref{eq_2}, we find	
\begin{eqnarray}\label{eq_25}
\widehat{\nabla}_{\Gamma_{t}^{\prime}}\Gamma_{t}^{\prime} &=& \widehat{\nabla}_{\displaystyle({}^{H}\!\gamma_{t}^{\prime} + {}^{V}\!\xi_{t}^{\prime})}({}^{H}\!\gamma_{t}^{\prime} + {}^{T}\!\xi_{t}^{\prime}) \nonumber\\
&=&\widehat{\nabla}_{\displaystyle{}^{H}\gamma_{t}^{\prime}}{}^{H}\gamma_{t}^{\prime} +\widehat{\nabla}_{\displaystyle{}^{H}\!\gamma_{t}^{\prime}}{}^{T}\!\xi_{t}^{\prime}+\widehat{\nabla}_{{}^{T}\!\xi_{t}^{\prime}}{}^{H}\!\gamma_{t}^{\prime}+\widehat{\nabla}_{{}^{T}\!\xi_{t}^{\prime}}{}^{T}\!\xi_{t}^{\prime} \nonumber\\
&=&{}^{H}\!(\gamma_{t}^{\prime\prime}+R(\varphi \xi,\xi_{t}^{\prime})\gamma_{t}^{\prime})+{}^{T}\!\xi_{t}^{\prime\prime} \nonumber\\
&=&{}^{H}\!(\gamma_{t}^{\prime\prime}+R(\varphi \xi,\xi_{t}^{\prime})\gamma_{t}^{\prime})+{}^{V}\!(\xi_{t}^{\prime\prime}-g(\xi_{t}^{\prime\prime}, \varphi \xi)\xi).
\end{eqnarray} 
On the other hand,
\begin{eqnarray*}
\widehat{\nabla}_{\Gamma_{t}^{\prime}}\Gamma_{t}^{\prime}&=&\varrho_{1}\Gamma_{t}^{\prime} +\varrho_{2}{}^{H}\!F \Gamma_{t}^{\prime}\nonumber\\
&=&\varrho_{1}({}^{H}\!\gamma_{t}^{\prime} + {}^{T}\!\xi_{t}^{\prime}) +\varrho_{2}{}^{H}\!F({}^{H}\!\gamma_{t}^{\prime} + {}^{T}\!\xi_{t}^{\prime})\nonumber\\
&=&\varrho_{1}({}^{H}\!\gamma_{t}^{\prime} + {}^{V}\!\xi_{t}^{\prime}-g(\xi_{t}^{\prime}, \varphi \xi){}^{V}\!\xi) +\varrho_{2}{}^{H}\!F({}^{H}\!\gamma_{t}^{\prime} + {}^{V}\!\xi_{t}^{\prime}-g(\xi_{t}^{\prime}, \varphi \xi){}^{V}\!\xi).
\end{eqnarray*}
From \eqref{eq_3}, we have
\begin{eqnarray}\label{eq_26}
\widehat{\nabla}_{\Gamma_{t}^{\prime}}\Gamma_{t}^{\prime}&=&{}^{V}\!\varrho_{1}{}^{H}\!\gamma_{t}^{\prime} +{}^{V}\!\varrho_{2}{}^{H}\!F {}^{H}\!\gamma_{t}^{\prime}+{}^{V}\!\varrho_{1}{}^{V}\!\xi_{t}^{\prime} +{}^{V}\!\varrho_{2}{}^{H}\!F {}^{V}\!\xi_{t}^{\prime}\nonumber\\
&=&{}^{H}\!(\varrho_{1}\gamma_{t}^{\prime}+\varrho_{2}F \gamma_{t}^{\prime})+{}^{V}\!(\varrho_{1}\xi_{t}^{\prime} +\varrho_{2}F \xi_{t}^{\prime}).
\end{eqnarray} 
From \eqref{eq_25}, \eqref{eq_26} and \eqref{eq_6}, the result immediately follows.
\end{proof}

\begin{corollary}\label{co_8}
Let $(M^{2m}, \varphi, g)$ be a para-K\"{a}hler-Norden manifold and $T^{\varphi}_{1}M$ its $\varphi$-unit tangent bundle equipped with the $\varphi$-Sasaki metric. A curve $\Gamma =(\gamma(t), \xi(t))$ on $T^{\varphi}_{1}M$ is an ${}^{H}\!\varphi$-planar with respect to $\widehat{\nabla}$ if and only if the 
\begin{equation*}
\left\{\begin{array}{lll}
\gamma_{t}^{\prime\prime}=R(\xi_{t}^{\prime},\varphi\xi)\gamma_{t}^{\prime}+\varrho_{1}\gamma_{t}^{\prime}+\varrho_{2} \varphi\gamma_{t}^{\prime}\\
\xi_{t}^{\prime\prime}=\varrho_{1}\xi_{t}^{\prime}+\varrho_{2} \varphi\xi_{t}^{\prime}-\rho^{2}\xi
\end{array}\right.
\end{equation*}
\end{corollary}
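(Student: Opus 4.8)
The plan is to obtain this corollary as the immediate specialization of Theorem \ref{th_11} to the case $F=\varphi$. Since $\varphi$ is by definition a $(1,1)$-tensor field on $M$ (indeed $\varphi^{2}=id_{M}$), it is an admissible choice of $F$ in Theorem \ref{th_11}, and its horizontal lift ${}^{H}\!\varphi$ is precisely the $(1,1)$-tensor field on $TTM$ whose planar curves we wish to characterize. So there is no independent computation to perform beyond substitution.

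Concretely, I would first recall that a curve $\Gamma=(\gamma(t),\xi(t))$ on $(T^{\varphi}_{1}M,g^{\varphi})$ is ${}^{H}\!\varphi$-planar with respect to $\widehat{\nabla}$ if there exist functions $\varrho_{1},\varrho_{2}$ of the parameter $t$ with
\begin{equation*}
\widehat{\nabla}_{\Gamma_{t}^{\prime}}\Gamma_{t}^{\prime}=\varrho_{1}\Gamma_{t}^{\prime}+\varrho_{2}\,{}^{H}\!\varphi\,\Gamma_{t}^{\prime},
\end{equation*}
which is the lifted analogue of \eqref{eq_21}. Then I would run the proof of Theorem \ref{th_11} with $F$ replaced by $\varphi$: the identity \eqref{eq_25} for $\widehat{\nabla}_{\Gamma_{t}^{\prime}}\Gamma_{t}^{\prime}$ is unchanged, while \eqref{eq_26} becomes ${}^{H}\!(\varrho_{1}\gamma_{t}^{\prime}+\varrho_{2}\varphi\gamma_{t}^{\prime})+{}^{V}\!(\varrho_{1}\xi_{t}^{\prime}+\varrho_{2}\varphi\xi_{t}^{\prime})$. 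Equating horizontal and vertical parts and using \eqref{eq_6}, which supplies $g(\xi_{t}^{\prime\prime},\varphi\xi)=-\rho^{2}$ with $\rho=\mathrm{const}$, $0\le\rho\le1$, yields the asserted system, with the extra term $-\rho^{2}\xi$ appearing in the vertical equation exactly as in Theorem \ref{th_11}.

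There is essentially no obstacle here; the only point deserving a word is the bookkeeping with the tangential lift, namely that in writing ${}^{T}\!\xi_{t}^{\prime}={}^{V}\!\xi_{t}^{\prime}-g(\xi_{t}^{\prime},\varphi\xi)\,{}^{V}\!\xi$ the correction term vanishes along any curve in $T^{\varphi}_{1}M$ because of \eqref{eq_3}. Hence the choice $F=\varphi$ introduces no new cross terms, and the corollary follows directly. I would simply write: ``Apply Theorem \ref{th_11} with $F=\varphi$,'' perhaps adding the two displayed lines above for the reader's convenience.
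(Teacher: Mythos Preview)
Your proposal is correct and matches the paper's approach exactly: the paper states this corollary immediately after Theorem \ref{th_11} without any separate proof, implicitly obtaining it by setting $F=\varphi$ in that theorem. Your remark that $\varphi$ is an admissible $(1,1)$-tensor and that the substitution introduces no new terms is precisely the justification the paper leaves to the reader.
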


In the particular case when $\varrho_{1} = 0$ and $\varrho_{2} = 1$ in the Theorem \ref{th_11}, we obtain the following result.

\begin{theorem}\label{th_12}
Let $(M^{2m}, \varphi, g)$ be a para-K\"{a}hler-Norden manifold and $T^{\varphi}_{1}M$ its $\varphi$-unit tangent bundle equipped with the $\varphi$-Sasaki metric and $F$ be a $(1,1)$-tensor field on $M$. A curve $\Gamma =(\gamma(t), \xi(t))$ on $TM$ is an ${}^{H}\!F$-geodesic with respect to $\widehat{\nabla}$ if and only if the 
\begin{equation*}
\left\{\begin{array}{lll}
\gamma_{t}^{\prime\prime}=R(\xi_{t}^{\prime},\varphi\xi)\gamma_{t}^{\prime}+ F\gamma_{t}^{\prime}\\
\xi_{t}^{\prime\prime}=F\xi_{t}^{\prime}-\rho^{2}\xi
\end{array}\right.
\end{equation*}
\end{theorem}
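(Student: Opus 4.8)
The plan is to start from the definition of an ${}^{H}\!F$-geodesic with respect to $\widehat{\nabla}$ — a curve $\Gamma=(\gamma(t),\xi(t))$ on $T^{\varphi}_{1}M$ whose velocity obeys $\widehat{\nabla}_{\Gamma_{t}^{\prime}}\Gamma_{t}^{\prime}={}^{H}\!F\,\Gamma_{t}^{\prime}$ — and to expand both sides in the adapted frame. Note first that $\gamma_{t}^{\prime\prime}=F\gamma_{t}^{\prime}$ is precisely the defining relation \eqref{eq_21} of an ${}^{H}\!F$-planar curve with $\varrho_{1}=0$ and $\varrho_{2}=1$, so the statement is the specialization of Theorem \ref{th_11} to these parameter values; I would nonetheless record the short direct computation below.

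First I would compute the left-hand side exactly as in \eqref{eq_25}: starting from the decomposition \eqref{eq_2}, using bilinearity of $\widehat{\nabla}$ and the four formulas of Theorem \ref{th_1} — the term $\widehat{\nabla}_{{}^{T}\!\xi_{t}^{\prime}}{}^{T}\!\xi_{t}^{\prime}$ vanishing by \eqref{eq_3} and the curvature term in $\widehat{\nabla}_{{}^{H}\!\gamma_{t}^{\prime}}{}^{H}\!\gamma_{t}^{\prime}$ vanishing by antisymmetry — I arrive, after rewriting ${}^{T}\!\xi_{t}^{\prime\prime}={}^{V}\!\xi_{t}^{\prime\prime}-g(\xi_{t}^{\prime\prime},\varphi\xi){}^{V}\!\xi$, at
\begin{equation*}
\widehat{\nabla}_{\Gamma_{t}^{\prime}}\Gamma_{t}^{\prime}={}^{H}\!\big(\gamma_{t}^{\prime\prime}+R(\varphi\xi,\xi_{t}^{\prime})\gamma_{t}^{\prime}\big)+{}^{V}\!\big(\xi_{t}^{\prime\prime}-g(\xi_{t}^{\prime\prime},\varphi\xi)\xi\big).
\end{equation*}
For the right-hand side, since ${}^{T}\!\xi_{t}^{\prime}={}^{V}\!\xi_{t}^{\prime}$ by \eqref{eq_3} and the horizontal lift of $F$ acts by ${}^{H}\!F({}^{H}\!Z)={}^{H}\!(FZ)$ and ${}^{H}\!F({}^{V}\!Z)={}^{V}\!(FZ)$, I get ${}^{H}\!F\,\Gamma_{t}^{\prime}={}^{H}\!(F\gamma_{t}^{\prime})+{}^{V}\!(F\xi_{t}^{\prime})$.

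Then I would equate the horizontal and vertical components. The horizontal part gives $\gamma_{t}^{\prime\prime}+R(\varphi\xi,\xi_{t}^{\prime})\gamma_{t}^{\prime}=F\gamma_{t}^{\prime}$, i.e.\ $\gamma_{t}^{\prime\prime}=R(\xi_{t}^{\prime},\varphi\xi)\gamma_{t}^{\prime}+F\gamma_{t}^{\prime}$ by the skew-symmetry of $R$ in its first two arguments. The vertical part gives $\xi_{t}^{\prime\prime}-g(\xi_{t}^{\prime\prime},\varphi\xi)\xi=F\xi_{t}^{\prime}$; differentiating the identity $g(\xi_{t}^{\prime},\varphi\xi)=0$ of \eqref{eq_3} along $\gamma$ yields $g(\xi_{t}^{\prime\prime},\varphi\xi)=-g(\xi_{t}^{\prime},\varphi\xi_{t}^{\prime})$, which — in view of \eqref{eq_4} and \eqref{eq_6} — I write as $-\rho^{2}$ with $0\le\rho\le1$, turning the vertical equation into $\xi_{t}^{\prime\prime}=F\xi_{t}^{\prime}-\rho^{2}\xi$. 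Reading the equalities backwards gives the converse, so the characterization is an equivalence.

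The only genuinely delicate step should be the bookkeeping between the vertical lift ${}^{V}$ and the tangential lift ${}^{T}$ on the hypersurface $T^{\varphi}_{1}M$: correctly separating off the scalar coefficient $-g(\xi_{t}^{\prime\prime},\varphi\xi)$ from ${}^{T}\!\xi_{t}^{\prime\prime}$ and then identifying $\rho^{2}=g(\xi_{t}^{\prime},\varphi\xi_{t}^{\prime})$ through \eqref{eq_3}--\eqref{eq_4}; its constancy along $\Gamma$, where one wishes to invoke it, is obtained exactly as in the proof of Theorem \ref{th_2}. Everything else is the same substitution of Theorem \ref{th_1} already performed in the proof of Theorem \ref{th_11}, now read with $\varrho_{1}=0$ and $\varrho_{2}=1$.
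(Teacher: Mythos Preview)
Your argument is correct and matches the paper's: the paper obtains this theorem simply as the case $\varrho_{1}=0$, $\varrho_{2}=1$ of Theorem~\ref{th_11}, which you note explicitly and then reproduce via the same computation \eqref{eq_25}--\eqref{eq_26} specialized to those values.

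One minor correction, though it does not affect the validity of what is actually proved: the constancy of $\rho^{2}=g(\xi_{t}^{\prime},\varphi\xi_{t}^{\prime})$ cannot be obtained ``exactly as in the proof of Theorem~\ref{th_2}'', because that argument relies on the second geodesic equation $\xi_{t}^{\prime\prime}=g(\xi_{t}^{\prime\prime},\varphi\xi)\,\xi$ from \eqref{eq_5} to conclude $g(\xi_{t}^{\prime\prime},\varphi\xi_{t}^{\prime})=0$. For an ${}^{H}\!F$-geodesic one has instead $\xi_{t}^{\prime\prime}=F\xi_{t}^{\prime}+g(\xi_{t}^{\prime\prime},\varphi\xi)\,\xi$, whence $\tfrac{d}{dt}\,g(\xi_{t}^{\prime},\varphi\xi_{t}^{\prime})=2g(F\xi_{t}^{\prime},\varphi\xi_{t}^{\prime})$, which need not vanish for a general $(1,1)$-tensor $F$. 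Fortunately the statement of Theorem~\ref{th_12} does not assert that $\rho$ is constant, and your derivation of the two displayed equations uses only the identity $g(\xi_{t}^{\prime\prime},\varphi\xi)=-g(\xi_{t}^{\prime},\varphi\xi_{t}^{\prime})$, valid for any curve on $T^{\varphi}_{1}M$, so the proof stands. The same remark applies to the paper's own Theorem~\ref{th_11}, which invokes \eqref{eq_6} without rejustifying constancy in the $F$-planar setting.
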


\begin{corollary}\label{co_9}
Let $(M^{2m}, \varphi, g)$ be a para-K\"{a}hler-Norden manifold and $T^{\varphi}_{1}M$ its $\varphi$-unit tangent bundle equipped with the $\varphi$-Sasaki metric. A curve $\Gamma =(\gamma(t), \xi(t))$ on $T^{\varphi}_{1}M$ is an ${}^{H}\!\varphi$-geodesic with respect to $\widehat{\nabla}$ if and only if the 
\begin{equation*}
\left\{\begin{array}{lll}
\gamma_{t}^{\prime\prime}=R(\xi_{t}^{\prime},\varphi\xi)\gamma_{t}^{\prime}+ \varphi\gamma_{t}^{\prime}\\
\xi_{t}^{\prime\prime}=\varphi\xi_{t}^{\prime}-\rho^{2}\xi
\end{array}\right.
\end{equation*}
\end{corollary}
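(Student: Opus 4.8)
The plan is to obtain Corollary \ref{co_9} as the specialization of Theorem \ref{th_12} to the case $F=\varphi$. The para-complex structure $\varphi$ is, by definition, a globally defined $(1,1)$-tensor field on $M$, hence an admissible choice for the tensor $F$ in Theorem \ref{th_12}. Substituting $F=\varphi$ into the characterizing system
\begin{equation*}
\left\{\begin{array}{lll}
\gamma_t''=R(\xi_t',\varphi\xi)\gamma_t'+F\gamma_t'\\
\xi_t''=F\xi_t'-\rho^2\xi
\end{array}\right.
\end{equation*}
yields at once the system in the statement, so in this form the proof is pure bookkeeping.

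For completeness I would also record a direct argument not routed through Theorem \ref{th_12}. An ${}^H\varphi$-geodesic is a curve $\Gamma$ with $\widehat{\nabla}_{\Gamma_t'}\Gamma_t'={}^H\varphi\,\Gamma_t'$, where ${}^H\varphi$ is the horizontal lift of $\varphi$, acting as $\varphi$ on both the horizontal and the tangential summands. Using \eqref{eq_2} one writes $\Gamma_t'={}^H\gamma_t'+{}^T\xi_t'$, and by Theorem \ref{th_1} (exactly the computation \eqref{eq_25}) one gets
\begin{equation*}
\widehat{\nabla}_{\Gamma_t'}\Gamma_t'={}^H\big(\gamma_t''+R(\varphi\xi,\xi_t')\gamma_t'\big)+{}^V\big(\xi_t''-g(\xi_t'',\varphi\xi)\xi\big).
\end{equation*}
On the other hand, expanding ${}^H\varphi\,\Gamma_t'$ and discarding the $g(\xi_t',\varphi\xi)$ term via \eqref{eq_3}, as in the proof of Theorem \ref{th_11}, gives ${}^H\varphi\,\Gamma_t'={}^H(\varphi\gamma_t')+{}^V(\varphi\xi_t')$. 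Equating horizontal components and using the antisymmetry $R(\varphi\xi,\xi_t')=-R(\xi_t',\varphi\xi)$ produces $\gamma_t''=R(\xi_t',\varphi\xi)\gamma_t'+\varphi\gamma_t'$; equating vertical components gives $\xi_t''-g(\xi_t'',\varphi\xi)\xi=\varphi\xi_t'$, and then invoking \eqref{eq_6}, namely $g(\xi_t'',\varphi\xi)=-\rho^2$, yields $\xi_t''=\varphi\xi_t'-\rho^2\xi$.

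I do not expect any real obstacle. The only points requiring a moment's care are the identification of the tangential component of $\widehat{\nabla}_{\Gamma_t'}\Gamma_t'$ with a vertical lift through ${}^T Y={}^V Y-g(Y,\varphi\xi){}^V\xi$, and the replacement of $-g(\xi_t'',\varphi\xi)$ by $-\rho^2$ using the constancy of $g(\xi_t',\varphi\xi_t')$ from Theorem \ref{th_2}; both facts are already established in the excerpt, so the corollary follows immediately.
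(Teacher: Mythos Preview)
Your proposal is correct and matches the paper's approach exactly: the paper states Corollary \ref{co_9} immediately after Theorem \ref{th_12} with no separate proof, so it is obtained precisely by the specialization $F=\varphi$ that you carry out. Your additional direct computation simply unpacks the proof of Theorem \ref{th_11}/\ref{th_12} in the special case $F=\varphi$ and adds nothing new, though it is consistent with the paper's derivation.
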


\begin{theorem}\label{th_13}
Let $(M^{2m}, \varphi, g)$ be a para-K\"{a}hler-Norden manifold and $T^{\varphi}_{1}M$ its $\varphi$-unit tangent bundle equipped with the $\varphi$-Sasaki metric and $F$ be a $(1,1)$-tensor field on $M$. If $\Gamma =(\gamma(t), \xi(t))$ is a horizontal lift of $\gamma$ and $\Gamma\in T^{\varphi}_{1}M$, then $\Gamma$ is an ${}^{H}\!F$-planar curve (resp., ${}^{H}\!F$-geodesic) if and only if $\gamma$ is an $F$-planar curve (resp., $F$-geodesic).
\end{theorem}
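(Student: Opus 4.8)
The plan is to reduce the assertion on $T^{\varphi}_{1}M$ to an elementary identity on the base $M$, exploiting that a horizontal lift has vanishing $\xi_{t}^{\prime}$, in the same spirit as the proof of Theorem \ref{th_10} but now with the induced connection $\widehat{\nabla}$ of Theorem \ref{th_1}. First I would record that, $\Gamma=(\gamma(t),\xi(t))$ being a horizontal lift, we have $\xi_{t}^{\prime}=0$; then by \eqref{eq_2} together with the definition of the tangential lift, ${}^{T}\!\xi_{t}^{\prime}={}^{V}\!\xi_{t}^{\prime}-g(\xi_{t}^{\prime},\varphi\xi){}^{V}\!\xi=0$, so $\Gamma_{t}^{\prime}={}^{H}\!\gamma_{t}^{\prime}$. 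The hypothesis $\Gamma\in T^{\varphi}_{1}M$ makes $\widehat{\nabla}$ and the tangential lift available along $\Gamma$, and it is consistent with the horizontal-lift condition: since $\xi_{t}^{\prime}=0$ one gets $g(\xi,\varphi\xi)=\text{const}$, which must equal $1$ (as observed before Corollary \ref{co_1}).

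Next I would compute $\widehat{\nabla}_{\Gamma_{t}^{\prime}}\Gamma_{t}^{\prime}=\widehat{\nabla}_{{}^{H}\!\gamma_{t}^{\prime}}{}^{H}\!\gamma_{t}^{\prime}$ via formula $(1)$ of Theorem \ref{th_1}: this equals ${}^{H}\!(\nabla_{\gamma_{t}^{\prime}}\gamma_{t}^{\prime})-\tfrac{1}{2}{}^{T}\!(R(\gamma_{t}^{\prime},\gamma_{t}^{\prime})\xi)={}^{H}\!\gamma_{t}^{\prime\prime}$, the curvature term dropping out by the antisymmetry $R(\gamma_{t}^{\prime},\gamma_{t}^{\prime})=0$. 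On the other side, ${}^{H}\!F\,\Gamma_{t}^{\prime}={}^{H}\!F\,{}^{H}\!\gamma_{t}^{\prime}={}^{H}\!(F\gamma_{t}^{\prime})$ and $\Gamma_{t}^{\prime}={}^{H}\!\gamma_{t}^{\prime}$, so the defining relation $\widehat{\nabla}_{\Gamma_{t}^{\prime}}\Gamma_{t}^{\prime}=\varrho_{1}\Gamma_{t}^{\prime}+\varrho_{2}\,{}^{H}\!F\,\Gamma_{t}^{\prime}$ of an ${}^{H}\!F$-planar curve becomes ${}^{H}\!\gamma_{t}^{\prime\prime}={}^{H}\!(\varrho_{1}\gamma_{t}^{\prime}+\varrho_{2}F\gamma_{t}^{\prime})$. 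Since the horizontal lift is fibrewise injective on vector fields, this is equivalent to $\gamma_{t}^{\prime\prime}=\varrho_{1}\gamma_{t}^{\prime}+\varrho_{2}F\gamma_{t}^{\prime}$, i.e. to $\gamma$ being $F$-planar on $(M,\nabla)$. Specializing $\varrho_{1}=0$, $\varrho_{2}=1$ yields the ${}^{H}\!F$-geodesic statement; reading the chain of equivalences backwards gives the converse, exactly as in Theorem \ref{th_10} and Corollary \ref{co_7}.

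Since the computation is short, I do not expect a genuine obstacle: the only points needing care are the compatibility of the two hypotheses (horizontal lift together with $\Gamma\in T^{\varphi}_{1}M$ forcing $g(\xi,\varphi\xi)\equiv 1$) and the vanishing of the curvature contribution in $\widehat{\nabla}_{{}^{H}\!\gamma_{t}^{\prime}}{}^{H}\!\gamma_{t}^{\prime}$, both of which are immediate. If anything requires a word of justification it is the remark that the ${}^{H}\!F$-planarity is tested within $T_{(x,\xi)}T^{\varphi}_{1}M$, which is harmless here because $\Gamma_{t}^{\prime}={}^{H}\!\gamma_{t}^{\prime}$ and ${}^{H}\!F\,\Gamma_{t}^{\prime}={}^{H}\!(F\gamma_{t}^{\prime})$ are both horizontal, hence tangent to $T^{\varphi}_{1}M$.
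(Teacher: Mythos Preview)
Your proposal is correct and follows essentially the same route as the paper's own proof: both exploit $\xi_{t}^{\prime}=0$ to reduce $\Gamma_{t}^{\prime}$ to ${}^{H}\!\gamma_{t}^{\prime}$, compute $\widehat{\nabla}_{\Gamma_{t}^{\prime}}\Gamma_{t}^{\prime}={}^{H}\!\gamma_{t}^{\prime\prime}$ via Theorem~\ref{th_1}, and then match against $\varrho_{1}\Gamma_{t}^{\prime}+\varrho_{2}\,{}^{H}\!F\,\Gamma_{t}^{\prime}$ to obtain the equivalence. Your additional remarks on the antisymmetry of $R$, the injectivity of the horizontal lift, and the tangency of ${}^{H}\!(F\gamma_{t}^{\prime})$ to $T^{\varphi}_{1}M$ are welcome clarifications that the paper leaves implicit.
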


\begin{proof} Let $\gamma$ be an $F$-planar with respect to $\nabla$ on $M$, i.e. $\gamma$ satisfies	
\begin{eqnarray*}
\gamma_{t}^{\prime\prime}=\varrho_{1}\gamma_{t}^{\prime} +\varrho_{2}F \gamma_{t}^{\prime},
\end{eqnarray*} 
where $\varrho_{1}$ and $\varrho_{2}$ are some functions of the parameter $t$. Since $\Gamma =(\gamma(t), \xi(t))$ is a horizontal lift of a curve $\gamma$ then $\xi_{t}^{\prime}=0$
and  $\Gamma_{t}^{\prime}={}^{H}\!\gamma_{t}^{\prime}$.\\	
On the other hand,
\begin{eqnarray*}
\widehat{\nabla}_{\Gamma_{t}^{\prime}}\Gamma_{t}^{\prime}&=&{}^{H}\!\gamma_{t}^{\prime\prime}\\
&=&{}^{H}\!(\varrho_{1}\gamma_{t}^{\prime}+\varrho_{2}F \gamma_{t}^{\prime})\\
&=&{}^{V}\!\varrho_{1}{}^{H}\!\gamma_{t}^{\prime} +{}^{V}\!\varrho_{2}{}^{H}\!F {}^{H}\!\gamma_{t}^{\prime}\\
&=&{}^{V}\!\varrho_{1}\Gamma_{t}^{\prime}+{}^{V}\!\varrho_{2}{}^{H}\!F \Gamma_{t}^{\prime}.
\end{eqnarray*} 
i.e. $\Gamma$ be an ${}^{H}\!F$-planar with respect to $\widehat{\nabla}$. In the case of $\varrho_{1} = 0$ and $\varrho_{2} = 1$, we
get that $\Gamma$ is an ${}^{H}\!F$-geodesic if and only $\gamma$ is an $F$-geodesic.
\end{proof}


\end{document}